\newtheorem{theorem}{Theorem}[section]
\newtheorem{lemma}{Lemma}[section]
\newtheorem{corollary}{Corollary}[section]
\newtheorem{definition}{Definition}[section]
\newcommand{\be}{\begin{equation}}
\newcommand{\ee}{\end{equation}}
\newcommand{\beas}{\begin{eqnarray*}}
\newcommand{\eeas}{\end{eqnarray*}}
\newcommand{\bea}{\begin{eqnarray}}
\newcommand{\eea}{\end{eqnarray}}
\numberwithin{equation}{section}
\begin{document}

\setlength{\unitlength}{1mm} \baselineskip .45cm
\setcounter{page}{1}
\pagenumbering{arabic}
\title[]
{almost co-K\"ahler manifolds and $(m,\rho)$-quasi-Einstein solitons}
\author[  ]
{ Krishnendu De, Mohammad Nazrul Islam Khan$^{*}$ and Uday Chand De   }

\address
 { Department of Mathematics,
 Kabi Sukanta Mahavidyalaya,
The University of Burdwan.
Bhadreswar, P.O.-Angus, Hooghly,
Pin 712221, West Bengal, India. ORCID iD: https://orcid.org/0000-0001-6520-4520}
\email{krishnendu.de@outlook.in }
\address
 {Department of Computer Engineering, College of Computer, Qassim University,
Buraydah 51452, Saudi Arabia.}
\email{m.nazrul@qu.edu.sa}
\address
{Department of Pure Mathematics, University of Calcutta, West Bengal, India. ORCID iD: https://orcid.org/0000-0002-8990-4609}
\email {uc$_{-}$de@yahoo.com}

\footnotetext {$\bf{2020\ Mathematics\ Subject\ Classification\:}.$ 53D15, 53C25.
\\ {Key words and phrases: Almost Co-K\"ahler manifold; $(\kappa,\mu)$-almost co-K\"ahler manifold; compact manifold; $(m,\rho)$-quasi-Einstein soliton.\\
\thanks{$^{*}$ Corresponding author}
}}
\maketitle
\begin{abstract}
The present paper aims to investigate $(m,\rho)$-quasi-Einstein metrices on almost co-K\"ahler manifolds $\mathcal{M}$. It is proven that if a $(\kappa,\mu)$-almost co-K\"ahler manifold with $\kappa<0$ is $(m,\rho)$-quasi-Einstein manifold, then $\mathcal{M}$ represents a $N(\kappa)$-almost co-K\"ahler manifold and the manifold is locally isomorphic to a solvable non-nilpotent Lie group. Next, we study the three dimensional case and get the above mentioned result along with the manifold $\mathcal{M}^3$ becoming an $\eta$-Einstein manifold. We also show that there does not exist $(m,\rho)$-quasi-Einstein structure on a compact $(\kappa,\mu)$-almost co-K\"ahler manifold of dimension greater than three with $\kappa<0$. Further, we prove that an almost co-K\"ahler manifold satisfying $\eta$-Einstein condition with constant coefficients reduces to a $K$-almost co-K\"ahler manifold, provided $ma_{1} \neq (2n-1)b_{1}$ and $m \neq 1$. We also characterize perfect fluid spacetime whose Lorentzian metric is equipped with $(m, \rho)$-quasi Einstein solitons and acquired that the perfect fluid spacetime has vanishing vorticity, or it represents dark energy era under certain restriction on the potential function. Finally, we construct an example of an almost co-K\"ahler manifold with $(m,\rho)$-quasi-Einstein solitons.
\end{abstract}	
	
\section{Introduction}
In \cite{bbw}, Boothby and Wang investigated an odd-dimensional differentiable manifold in 1958 with the help of almost contact and contact structure and investigated its features from a topological perspective. Making use of tensor calculus, Sasaki \cite{sas} described the characteristics of a differentiable manifold which is odd-dimensional, with contact structures in 1960. Such manifolds were referred to as contact manifolds. After that, other researchers have discovered various types of contact manifolds and investigated their characteristics. The Kenmotsu, the Sasakian, and the co-K\"ahler manifolds are Tanno's divisions of the almost contact metric manifolds \cite{tano}, whose automorphism groups have the highest dimensions. It should be noted that the cosymplectic manifolds that Blair \cite{5} introduced and Goldberg and Yano \cite{3} examined are nothing but the co-K\"ahler manifolds. The co-K\"ahler manifolds can be thought of from some topological perspectives as the analogue of K\"ahler manifolds in odd-dimension, that is why the new terminology has been introduced. The characteristics of almost co-K\"ahler manifolds, which are an extension of co-K\"ahler manifolds, were examined by several researchers. Perrone \cite{8} provided a thorough classification of the three-dimensional homogeneous almost co-K\"ahler manifolds as well as a local characterization of these manifolds under the assumption of local symmetry. We advise readers to read (\cite{chen}, \cite{18}, \cite{30}-\cite{21}) and the references therein for more information regarding almost co-K\"ahler manifolds.\par

In both mathematics and physics, Einstein manifolds are crucial. In Riemannian and Semi-Riemannian geometry, it is interesting to investigate Einstein manifolds and their generalizations. Numerous generalizations of Einstein manifolds have been developed recently, including quasi-Einstein manifolds \cite{25}, generalized quasi-Einstein manifolds \cite{12}, $m$-quasi-Einstein manifolds \cite{2}, $(m,\rho)$-quasi-Einstein manifolds \cite{ 13}, and many more.\par

A Riemannian metric $g$ of an almost co-K\"ahler manifold is named a Ricci soliton \cite{17} if there is a $\lambda_{1} \in \mathbb{R}=$ constant, $\mathbb{R}$ is the set of all real numbers and a smooth vector field $X$ such that
\be Ric+\frac{1}{2}\pounds_{X} g=\lambda_{1} g, \label{101}\ee
in which $Ric$ indicates the Ricci tensor of the metric tensor $g$ and $\pounds$ denotes the Lie-derivative. If $\lambda_{1}$ is a smooth function, then the above soliton is called an almost Ricci soliton \cite{dug}.\par

If we choose a smooth function $\omega:\mathcal{M}\to \mathbb{R}$ with $X=D\omega$, $D$ being the gradient operator of $g$, then it is named a gradient Ricci soliton. Hence, the equation (\ref{101}) reduces to
\be Ric+Hess\, \omega=\lambda_{1} g, \label{102} \ee
where $Hess$ is the Hessian operator.\par

A Ricci soliton is nothing but a natural generalization of Einstein metric \cite{11}. In this paper we study $(m,\rho)$-quasi-Einstein solitons which are the generalization of Einstein solitons and gradient Ricci solitons.\par

In an almost co-K\"ahler manifold the metric $g$, is named a generalized quasi-Einstein soliton if there exist $\omega$, $\alpha$ and $\beta$ (smooth functions) such that
\be Ric+Hess\, \omega-\alpha d\omega \otimes d\omega=\beta g. \ee
In particular, if $\beta \in \mathbb{R}$ and $\alpha=0$ , then the foregoing soliton reduces to a gradient Ricci soliton and $m$-quasi-Einstein soliton \cite{23}, if $\alpha=\frac{1}{m}$ and $\beta \in \mathbb{R}$, where $m \in \mathbb{N}$.
Catino \cite{12} presented the idea of a generalized quasi-Einstein soliton, and Huang and Wei \cite{13} further proposed taking into consideration the idea of a $(m,\rho)$-quasi-Einstein-soliton as its special case.

\begin{definition}
In an almost co-K\"ahler manifold the metric $g$ is named $(m,\rho)$-quasi-Einstein soliton if there exists a smooth function $\omega : \mathcal{M}^n \to \mathcal{R}$ and $m,\rho,\lambda_{1}\in \mathbb{R}=$ constant with $0<m \leq \infty$ such that
\be
Ric+Hess\,\omega-\frac{1}{m} d\omega \otimes d\omega=\beta g=(\rho \tau+\lambda_{1})g, \label{104}
\ee
where $\tau$ denotes the scalar curvature.

\end{definition}

Obviously, a $(\infty,0)$-quasi-Einstein soliton is a gradient Ricci soliton. In \cite{dede}, De and De investigated $(m,\rho)$-quasi-Einstein solitons in the frame-work of paracontact manifolds. Demirbe$\breve{{\rm g}}$ and G\"uler \cite{20} studied rigidity of $(m,\rho)$-quasi-Einstein manifolds.
As the study of almost co-K\"ahler manifolds with $(m,\rho)$-quasi-Einstein solitons is still pending, we want to fill this gap in this paper.  Precisely, we establish the subsequent theorems:

\begin{theorem}
If the metric of a $(\kappa,\mu)$-almost co-K\"ahler manifold $\mathcal{M}^{2n+1}$, $n>1$, with $\kappa<0$ is a $(m,\rho)$-quasi-Einstein metric, then $\mathcal{M}$ represents a $N(\kappa)$-almost co-K\"ahler manifold. Also, the manifold is locally isomorphic to a solvable non-nilpotent Lie-group $\mathcal{G}_\sigma$ with the almost co-K\"ahler structure $(\eta,\zeta,\varphi,g)$, where $\sigma=\sqrt{-\kappa}$.\label{001}
\end{theorem}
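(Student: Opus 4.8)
The plan is to exploit the rigidity already built into the $(\kappa,\mu)$-nullity condition: I will show that the $(m,\rho)$-quasi-Einstein equation \eqref{104} forces $\mu=0$, so that $\mathcal M$ becomes an $N(\kappa)$-almost co-K\"ahler manifold, and the Lie-group conclusion then follows from the known structure theory. First I would record the standard facts valid on a $(\kappa,\mu)$-almost co-K\"ahler manifold $\mathcal M^{2n+1}$ with $\kappa<0$ and $n>1$: the functions $\kappa$ and $\mu$ are constant; $\nabla_X\zeta=h'X$ with $h'$ self-adjoint, $h'\zeta=0$, $\mathrm{tr}\,h'=0$ and $h'^{2}=\kappa\varphi^{2}$, so that on $\ker\eta$ the operator $h'$ has the two constant eigenvalues $\pm\sigma$, $\sigma=\sqrt{-\kappa}$ (hence $h'\neq0$), with $\varphi$ interchanging the corresponding eigendistributions and acting as a $g$-isometry on $\ker\eta$; the Reeb field satisfies $Q\zeta=2n\kappa\,\zeta$; and the Ricci operator $Q$ is a constant-coefficient combination of the identity, $\eta\otimes\zeta$ and $h'$ whose $h'$-component is a nonzero multiple of $\mu$. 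In particular the scalar curvature $\tau$ is constant, so the right-hand side $\beta g=(\rho\tau+\lambda_{1})g$ of \eqref{104} is a constant multiple of $g$.

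Next I would rewrite \eqref{104} as $\mathrm{Hess}\,\omega=\beta g-Ric+\tfrac1m\,d\omega\otimes d\omega$ and analyse it. Evaluating at $(\zeta,\zeta)$ and at $(\zeta,X)$ for $X\in\ker\eta$, and using $\nabla_\zeta\zeta=0$ and $\nabla_X\zeta=h'X$ to write $\mathrm{Hess}\,\omega(\zeta,\zeta)=\zeta(\zeta\omega)$ and $\mathrm{Hess}\,\omega(\zeta,X)=X(\zeta\omega)-(h'X)\omega$, I extract first-order relations binding $\zeta\omega$, $X(\zeta\omega)$ and $(h'X)\omega$ to the constants $\kappa,\mu,\beta$. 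A useful complement is the commutation identity
\[
(\nabla_X\,\mathrm{Hess}\,\omega)(Y,Z)-(\nabla_Y\,\mathrm{Hess}\,\omega)(X,Z)=-\,g\big(R(X,Y)Z,\,D\omega\big),
\]
which holds for the Hessian of any smooth function: its left-hand side is now expressed through $g,\eta,h'$ (whose covariant derivatives are known on a $(\kappa,\mu)$-space), while the curvature on the right is governed by the $(\kappa,\mu)$-condition together with the known curvature of such manifolds, so this produces an over-determined linear system for $D\omega$.

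The decisive step — and, I expect, the main obstacle — is to read $\mu=0$ off this system. The point is that the $h'$-part of $Ric$ has nothing it can be balanced against: on a $(\kappa,\mu)$-space with $\kappa<0$, the symmetric tensor $g(h'\cdot,\cdot)$ cannot occur as the trace-free, $\varphi$-anti-invariant part of $\mathrm{Hess}\,\omega+\tfrac1m\,d\omega\otimes d\omega$ unless its coefficient vanishes. Concretely I would evaluate the relations obtained above on unit vectors lying in the $+\sigma$- and $-\sigma$-eigenspaces of $h'$, compare the two resulting scalar identities (which differ only through the sign of the $h'$-terms, because $\varphi$ is a $g$-isometry on $\ker\eta$ and swaps the eigenspaces), and then differentiate once more along $\zeta$; the incompatibility that emerges forces the $h'$-coefficient of $Ric$, hence $\mu$, to be zero. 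Equivalently, one first shows $D\omega=(\zeta\omega)\zeta$ with $\zeta\omega$ a constant, which annihilates all off-$\zeta$ cross terms of $d\omega\otimes d\omega$ and again leaves $\mu\,g(h'\cdot,\cdot)=0$, whence $\mu=0$ since $h'\neq0$.

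With $\mu=0$ the $(\kappa,\mu)$-condition reduces to $R(X,Y)\zeta=\kappa\big(\eta(Y)X-\eta(X)Y\big)$, i.e. $\zeta$ belongs to the $\kappa$-nullity distribution, so $\mathcal M$ is an $N(\kappa)$-almost co-K\"ahler manifold. Finally I would invoke the classification of $N(\kappa)$-almost co-K\"ahler manifolds with $\kappa<0$: every such manifold is locally isomorphic, as an almost co-K\"ahler manifold, to the solvable non-nilpotent Lie group $\mathcal G_\sigma$ endowed with its canonical almost co-K\"ahler structure $(\eta,\zeta,\varphi,g)$, where $\sigma=\sqrt{-\kappa}$. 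This yields both assertions of the theorem.
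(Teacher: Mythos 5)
Your overall skeleton coincides with the paper's: rewrite the soliton equation (\ref{104}) as an expression for $\nabla D\omega$, apply the commutation (Ricci) identity to compute $K(E_{1},F_{1})D\omega$, confront the result with the $(\kappa,\mu)$-nullity condition (\ref{301}) to force $\mu=0$, and then invoke Dacko's classification (Theorem \ref{007}) of $N(\kappa)$-almost co-K\"ahler manifolds with $\kappa<0$. The first and last steps are fine, and your preliminary identities (e.g. $\mathrm{Hess}\,\omega(\zeta,X)=X(\zeta\omega)-(h'X)\omega$) are correct, apart from a small slip: on these manifolds $Q=2n\kappa\,\eta\otimes\zeta+\mu h$, so the trace-free non-$\eta$ part of the Ricci operator is carried by $h$, not $h'$.

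The genuine gap is in the decisive step, which you describe but never carry out. The pointwise assertion that $g(h\cdot,\cdot)$ ``cannot occur as the trace-free, $\varphi$-anti-invariant part of $\mathrm{Hess}\,\omega+\frac{1}{m}\,d\omega\otimes d\omega$ unless its coefficient vanishes'' is not a fact of linear algebra: at a single point nothing prevents the Hessian of a function from having such a component, so no comparison of evaluations on $\pm\sigma$-eigenvectors, even after differentiating along $\zeta$, yields $\mu=0$ by itself; the obstruction has to be extracted from the curvature identity, and you only assert that ``an incompatibility emerges.'' Your proposed shortcut, ``one first shows $D\omega=(\zeta\omega)\zeta$ with $\zeta\omega$ constant,'' is likewise unsupported: none of your relations at $(\zeta,\zeta)$ and $(\zeta,X)$ gives it, and the paper never proves it (in Theorem \ref{003} it is an extra hypothesis, not a consequence). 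The missing computation is short but essential: from $\nabla_{E_{1}}D\omega=(2n\kappa\rho+\lambda_{1})E_{1}-\mu hE_{1}-2n\kappa\eta(E_{1})\zeta+\frac{1}{m}(E_{1}\omega)D\omega$ one computes $g(K(E_{1},F_{1})D\omega,\zeta)$; using $h\zeta=0$, $\nabla\zeta=h'$ and $h'h=\kappa\varphi$ (a consequence of $h^{2}=\kappa\varphi^{2}$), the terms $-\mu(\nabla_{E_{1}}h)F_{1}+\mu(\nabla_{F_{1}}h)E_{1}$ contribute exactly $2\mu\kappa\,g(E_{1},\varphi F_{1})$, while every other surviving term carries a factor $\eta(\cdot)$ or $(\cdot\,\omega)\eta(\cdot)$. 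Equating this with $-g(K(E_{1},F_{1})\zeta,D\omega)$ computed from (\ref{301}) and then replacing $E_{1},F_{1}$ by $\varphi E_{1},\varphi F_{1}$ annihilates all the $\eta$- and $d\omega$-terms and leaves $0=2\mu\kappa\,g(E_{1},\varphi F_{1})$, whence $\mu=0$ since $\kappa<0$. Until you supply this (or an equivalent) explicit cancellation, the core claim of the theorem remains unproved in your write-up.
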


\begin{theorem}
Let the metric of a $(\kappa,\mu)$-almost co-K\"ahler manifold $\mathcal{M}^3$ with $\kappa<0$ is a $(m,\rho)$-quasi-Einstein metric. Then $\mathcal{M}$ represents a $N(\kappa)$-almost co-K\"ahler manifold. Also, $\mathcal{M}^3$ is locally isomorphic to $\mathcal{G}_\sigma$ with the almost co-K\"ahler structure, where $\sigma=\sqrt{-\kappa}$ and $\mathcal{M}^3$ becomes an $\eta$-Einstein manifold.\label{002}
\end{theorem}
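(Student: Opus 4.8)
The plan is to run the machinery of Theorem~\ref{001} specialized to dimension three and then upgrade the conclusion to $\eta$-Einstein. First I would record the standard identities on a $(\kappa,\mu)$-almost co-K\"ahler manifold: $h^{2}=\kappa\varphi^{2}$ with $\kappa\le 0$, $\nabla_{X}\zeta=-\varphi hX$ (so $\zeta$ is geodesic, $\nabla_{\zeta}\zeta=0$), $h\varphi+\varphi h=0$, $Q\zeta=2\kappa\zeta$ in dimension three, and the explicit form of the Ricci operator of a three-dimensional $(\kappa,\mu)$-almost co-K\"ahler manifold in terms of $\tau$, $\eta\otimes\zeta$ and $h$. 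Rewriting the defining relation \eqref{104} in operator form gives
\be
\nabla_{X}D\omega=(\rho\tau+\lambda_{1})X-QX+\frac{1}{m}(X\omega)\,D\omega,
\ee
which is the identity I would covariantly differentiate and skew-symmetrize to obtain a formula for $R(X,Y)D\omega$.

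Next, as in Theorem~\ref{001}, I would evaluate this curvature identity on $\zeta$ and compare it with the $(\kappa,\mu)$-nullity condition $R(X,Y)\zeta=\kappa(\eta(Y)X-\eta(X)Y)+\mu(\eta(Y)hX-\eta(X)hY)$. Since $\kappa<0$, the symmetric operator $h$ carries the two nonzero real eigenvalues $\pm\sigma$, $\sigma=\sqrt{-\kappa}$, on the contact distribution; testing the resulting relations against unit eigenvectors of $h$ should at once force $\mu=0$ and constrain $D\omega$ (one finds that $D\omega$ is a function multiple of $\zeta$ and that $\zeta\omega$ is essentially controlled). The vanishing of $\mu$ is precisely the assertion that $\mathcal{M}^{3}$ is an $N(\kappa)$-almost co-K\"ahler manifold, and the local isomorphism with the solvable non-nilpotent Lie group $\mathcal{G}_{\sigma}$, $\sigma=\sqrt{-\kappa}$, with the stated almost co-K\"ahler structure then follows verbatim from the classification of $N(\kappa)$-almost co-K\"ahler manifolds with negative $\kappa$ already invoked in Theorem~\ref{001}.

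For the additional $\eta$-Einstein conclusion I would feed $\mu=0$ back into the three-dimensional Ricci formula: once the $\mu h$-term disappears (and the remaining potential anisotropic terms are shown to vanish using $h^{2}=\kappa\varphi^{2}$ and $Q\zeta=2\kappa\zeta$), $Q$ takes the form $Q=(\tfrac{\tau}{2}-\kappa)I+(3\kappa-\tfrac{\tau}{2})\,\eta\otimes\zeta$, so it only remains to prove that $\tau$ is constant. I would get constancy of $\tau$ from the contracted second Bianchi identity $\operatorname{div}Q=\tfrac12 D\tau$ combined with the operator form of \eqref{104}, using that $D\omega$ is collinear with $\zeta$ and $\nabla_{X}\zeta=-\varphi hX$; substituting and tracing appropriately should leave $D\tau=0$. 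Hence $Q=aI+b\,\eta\otimes\zeta$ with constants $a,b$, i.e. $\mathcal{M}^{3}$ is $\eta$-Einstein.

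The main obstacle I anticipate is the eigenvector bookkeeping in the second step: extracting from the single curvature identity restricted to $\zeta$ enough independent scalar equations to conclude $\mu=0$, rather than merely a relation among $\mu$, $\kappa$ and derivatives of $\omega$. This is exactly where the hypothesis $\kappa<0$ is indispensable — it guarantees that $h$ is not nilpotent, so its $\pm\sigma$-eigenspaces are genuinely available as test directions — and where dimension three helps, since $h$ then has the single eigenvalue pair $\pm\sigma$ and the full curvature tensor is determined by the Ricci tensor, which is what makes the final $\eta$-Einstein upgrade (absent in the higher-dimensional Theorem~\ref{001}) possible.
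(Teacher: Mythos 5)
Your route to $\mu=0$ and to the Lie-group conclusion is essentially the paper's own argument: substitute the three-dimensional Ricci formula $Q=(\tfrac{\tau}{2}-\kappa)I+(3\kappa-\tfrac{\tau}{2})\eta\otimes\zeta+\mu h$ into \eqref{104}, differentiate and skew-symmetrize to get $K(E_{1},F_{1})D\omega$, pair with $\zeta$, compare with the $(\kappa,\mu)$-nullity condition, and test on directions orthogonal to $\zeta$ (the paper substitutes $\varphi E_{1},\varphi F_{1}$, which is the same as your eigenvector test since all $\eta$-terms and $\tau$-, $\omega$-derivative terms attached to $\eta$ drop out), leaving $0=2\mu\kappa g(E_{1},\varphi F_{1})$ and hence $\mu=0$ because $\kappa<0$; Theorem~\ref{007} then gives the local isomorphism with $\mathcal{G}_{\sigma}$, $\sigma=\sqrt{-\kappa}$. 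Up to this point the proposal is correct and matches the paper.

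The last step is where you deviate, and it contains an unsupported claim. You assert that the curvature identity restricted to $\zeta$ also yields $D\omega=(\zeta\omega)\zeta$, and you then want to use this together with the contracted Bianchi identity to prove $\tau$ constant. But the identity in question only relates \emph{horizontal} derivatives of $\omega$ to horizontal derivatives of $\tau$ (after $\mu=0$ it reads, for horizontal $E_{1}$, $-\kappa(E_{1}\omega)=\rho(E_{1}\tau)-\tfrac{1}{m}(\rho\tau+\lambda_{1}-2\kappa)(E_{1}\omega)$); collinearity of $D\omega$ with $\zeta$ does not follow from it, so as written this step fails. Fortunately none of it is needed: in this paper an $\eta$-Einstein manifold is defined by $Ric=a_{1}g+b_{1}\eta\otimes\eta$ with $a_{1},b_{1}$ merely \emph{smooth functions}, so once $\mu=0$ the displayed Ricci formula \eqref{314} already exhibits $Q=(\tfrac{\tau}{2}-\kappa)I+(3\kappa-\tfrac{\tau}{2})\eta\otimes\zeta$ in $\eta$-Einstein form, which is exactly how the paper concludes. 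If you did want constant coefficients, the clean way is not your Bianchi argument but the paper's Lemma on the Ricci operator of a $(\kappa,\mu)$-almost co-K\"ahler manifold with $\kappa<0$ (equation \eqref{303}), which with $\mu=0$ and $n=1$ forces $\tau=2\kappa$.
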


\begin{theorem}
There does not exist $(m,\rho)$-quasi-Einstein structures $(g,\omega,\lambda_{1})$ with $D\omega=(\zeta \omega)\zeta$ on a compact $(\kappa,\mu)$-almost co-K\"ahler manifold with $\kappa<0$.\label{003}
\end{theorem}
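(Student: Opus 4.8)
The plan is to exploit the structural reduction already obtained in the paper and then integrate a scalar identity over the compact manifold. Since $g$ is $(m,\rho)$-quasi-Einstein and $\kappa<0$, Theorem~\ref{001} (when $\dim\mathcal{M}>3$) or Theorem~\ref{002} (when $\dim\mathcal{M}=3$) shows that $\mathcal{M}$ is an $N(\kappa)$-almost co-K\"ahler manifold which is locally isomorphic to the homogeneous solvable non-nilpotent Lie group $\mathcal{G}_\sigma$ with $\sigma=\sqrt{-\kappa}$. Reading off the Ricci tensor of an $N(\kappa)$-almost co-K\"ahler manifold (equivalently, computing the curvature of the left-invariant metric on $\mathcal{G}_\sigma$) one gets $Ric=2n\kappa\,\eta\otimes\eta$, so the scalar curvature $\tau=2n\kappa$ is a \emph{negative constant} on the connected manifold $\mathcal{M}$. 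I will also use the standard almost co-K\"ahler facts $\operatorname{div}\zeta=0$ and $g(\zeta,\zeta)=1$, together with $Ric(\zeta,\zeta)=2n\kappa$, which follows as well by contracting the $(\kappa,\mu)$-nullity condition with $h\zeta=0$ and $\operatorname{tr}h=0$.

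Next I would feed in the hypothesis $D\omega=(\zeta\omega)\zeta$. Writing $\psi:=\zeta\omega$, so that $D\omega=\psi\zeta$, one has $|D\omega|^2=\psi^2$, $\Delta\omega=\operatorname{div}(\psi\zeta)=\zeta\psi+\psi\operatorname{div}\zeta=\zeta\psi$, and $Hess\,\omega(\zeta,\zeta)=g(\nabla_\zeta(\psi\zeta),\zeta)=\zeta\psi$, since $g(\nabla_\zeta\zeta,\zeta)=0$. Taking the trace of the defining equation \eqref{104} yields
\[
\tau+\zeta\psi-\tfrac1m\psi^2=(2n+1)(\rho\tau+\lambda_1),
\]
and evaluating \eqref{104} on $(\zeta,\zeta)$ yields $2n\kappa+\zeta\psi-\tfrac1m\psi^2=\rho\tau+\lambda_1$. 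Subtracting gives $\tau-2n\kappa=2n(\rho\tau+\lambda_1)$; since $\tau=2n\kappa$ this forces $\rho\tau+\lambda_1=0$, and substituting back we obtain
\[
\zeta\psi-\frac1m\psi^2=-2n\kappa .
\]

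Finally I would integrate this identity over the compact manifold $\mathcal{M}$. Because $\operatorname{div}\zeta=0$, the divergence theorem gives $\int_{\mathcal{M}}\zeta\psi\,dV=\int_{\mathcal{M}}\operatorname{div}(\psi\zeta)\,dV=0$, hence $-\tfrac1m\int_{\mathcal{M}}\psi^2\,dV=-2n\kappa\,\operatorname{vol}(\mathcal{M})$. With $0<m\le\infty$ the left-hand side is $\le0$, whereas the right-hand side is strictly positive because $\kappa<0$ and $n\ge1$; this contradiction shows that no $(m,\rho)$-quasi-Einstein structure $(g,\omega,\lambda_1)$ with $D\omega=(\zeta\omega)\zeta$ exists on such a manifold.

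The step I expect to be the main obstacle is establishing that $\tau$ is the negative constant $2n\kappa$, since it is exactly this sign that makes the integrated identity contradictory; this is where the earlier structure theorems (or the known Ricci tensor of $N(\kappa)$-almost co-K\"ahler manifolds) are essential. It is worth emphasizing that one cannot shorten the argument by claiming that ``compact'' clashes directly with ``locally isomorphic to the non-compact group $\mathcal{G}_\sigma$'': compact quotients of $\mathcal{G}_\sigma$ do exist (already in dimension three), so the contradiction must genuinely be extracted from the sign of $\kappa$ through the integral rather than from a topological incompatibility. The remaining ingredients ($\operatorname{div}\zeta=0$, $Ric(\zeta,\zeta)=2n\kappa$, and the vanishing of the cross terms in $Hess\,\omega(\zeta,\zeta)$ and $\Delta\omega$) are routine consequences of the standard almost co-K\"ahler identities.
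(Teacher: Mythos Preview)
Your proof is correct and follows essentially the same route as the paper: derive the scalar identity $2n\kappa+\zeta(\zeta\omega)-\tfrac1m(\zeta\omega)^2=0$ from the $(\zeta,\zeta)$-component of \eqref{104}, then integrate over the compact manifold (using $\operatorname{div}\zeta=0$, equivalently the paper's $\Delta\omega$ computation) and exploit $\kappa<0$ to obtain a contradiction. The only unnecessary detour is your appeal to Theorems~\ref{001}--\ref{002} to secure $\tau=2n\kappa$: the paper simply traces the Ricci formula $Q=2n\kappa\,\eta\otimes\zeta+\mu h$ of \eqref{303}, which already yields $\tau=2n\kappa$ for every $(\kappa,\mu)$-almost co-K\"ahler manifold with $\kappa<0$, so the reduction to the $N(\kappa)$ case is not needed and the step you flagged as ``the main obstacle'' is in fact immediate.
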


\begin{theorem}
If an almost co-K\"ahler manifold $\mathcal{M}^{2n+1}$ satisfying $\eta$-Einstein condition $Ric=a_{1}g+b_{1}\eta \otimes \eta,\, b_{1} \neq 0$ with constant coefficients is $(m,\rho)$-quasi-Einstein manifold and $m \neq 1$, then $\mathcal{M}^{2n+1}$ is a $K$-almost co-K\"ahler manifold, provided $ma_{1} \neq (2n-1)b_{1}$.\label{004}
\end{theorem}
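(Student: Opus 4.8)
The plan is to fold the two hypotheses into one equation for the potential $\omega$, differentiate it once to bring in the curvature, and then read off $h\equiv0$. Since the $\eta$-Einstein coefficients $a_{1},b_{1}$ are constant, the scalar curvature $\tau=(2n+1)a_{1}+b_{1}$ is constant, so $\rho\tau+\lambda_{1}$ is constant; substituting $Ric=a_{1}g+b_{1}\,\eta\otimes\eta$ into \eqref{104} gives
\[
Hess\,\omega-\tfrac1m\,d\omega\otimes d\omega=\gamma\,g-b_{1}\,\eta\otimes\eta,
\]
where $\gamma:=\rho\tau+\lambda_{1}-a_{1}$ is a constant, that is, $\nabla_{X}D\omega=\gamma X-b_{1}\eta(X)\zeta+\tfrac1m(X\omega)D\omega$. (For finite $m$ one may instead linearise with $u=e^{-\omega/m}$, which removes the quadratic term; the subsequent steps are identical.)

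Next I would compute $R(X,Y)D\omega$ by covariantly differentiating this relation and antisymmetrising. Two facts about almost co-K\"ahler manifolds are used: $d\eta=0$ kills the contribution of $X(\eta(Y))-Y(\eta(X))$, and $\nabla_{X}\zeta=-\varphi hX$ introduces a $\varphi h$-term, so that
\[
R(X,Y)D\omega=\tfrac{\gamma}{m}\bigl[(Y\omega)X-(X\omega)Y\bigr]-\tfrac{b_{1}}{m}\bigl[(Y\omega)\eta(X)-(X\omega)\eta(Y)\bigr]\zeta+b_{1}\bigl[\eta(Y)\varphi hX-\eta(X)\varphi hY\bigr].
\]
Contracting over $X$ against an orthonormal frame, the $\varphi h$-terms disappear because $\mathrm{tr}(\varphi h)=0$ and $g(\varphi hY,\zeta)=0$, leaving $Ric(Y,D\omega)=\tfrac{2n\gamma-b_{1}}{m}(Y\omega)+\tfrac{b_{1}}{m}(\zeta\omega)\eta(Y)$. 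Equating this with the $\eta$-Einstein value $Ric(Y,D\omega)=a_{1}(Y\omega)+b_{1}(\zeta\omega)\eta(Y)$ yields the central identity
\[
\bigl(ma_{1}-2n\gamma+b_{1}\bigr)(Y\omega)=b_{1}(1-m)(\zeta\omega)\,\eta(Y)\qquad\text{for every }Y.
\]

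The case analysis is where the two numerical restrictions are spent. Set $P:=ma_{1}-2n\gamma+b_{1}$ and $Q:=m(a_{1}+b_{1})-2n\gamma$, so $Q-P=b_{1}(m-1)$; as $b_{1}\neq0$ and $m\neq1$, $P$ and $Q$ cannot both vanish. Taking $Y=\zeta$ in the central identity gives $(\zeta\omega)\,Q=0$. If $P=0$, then $Q\neq0$, hence $\zeta\omega\equiv0$; differentiating $g(D\omega,\zeta)=0$ and using $\nabla_{X}\zeta=-\varphi hX$ together with $g(\varphi h\,D\omega,\zeta)=0$ then forces $\gamma=b_{1}$, whereupon $P=0$ reads $ma_{1}=(2n-1)b_{1}$, which is ruled out by hypothesis. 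Hence $P\neq0$; restricting the central identity to $Y\perp\zeta$ gives $Y\omega=0$ there, so $D\omega=(\zeta\omega)\,\zeta$. Moreover $\omega$ is not constant, since $d\omega=0$ would make $\gamma g=b_{1}\eta\otimes\eta$, and taking the metric trace and the $(\zeta,\zeta)$-value then forces $b_{1}=0$.

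Finally, inserting $D\omega=(\zeta\omega)\,\zeta$ into $\nabla_{X}D\omega=\gamma X-b_{1}\eta(X)\zeta+\tfrac1m(X\omega)D\omega$ and using $\nabla_{X}\zeta=-\varphi hX$, the component tangent to the contact distribution (take $X\perp\zeta$) reads $(\zeta\omega)\,\varphi hX=-\gamma X$. On the open set where $\zeta\omega\neq0$ this exhibits $\varphi h$ as a scalar multiple of the identity on the $2n$-dimensional contact distribution, and since $\mathrm{tr}(\varphi h)=0$ that scalar must be $0$; as $h$ preserves the distribution, $h\zeta=0$ and $\varphi$ is invertible on the distribution, we obtain $h=0$ there. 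The zero set of $\zeta\omega$ has empty interior, for otherwise $D\omega$ vanishes on an open set and $b_{1}=0$ as above; hence $h=0$ on a dense subset and therefore everywhere by continuity. Thus $\zeta$ is Killing, i.e.\ $\mathcal{M}^{2n+1}$ is a $K$-almost co-K\"ahler manifold. The main difficulty is the case analysis in the third paragraph: singling out exactly the two degenerate alternatives and verifying that $m\neq1$ excludes one of them and $ma_{1}\neq(2n-1)b_{1}$ the other, while keeping track of the point where the non-constancy of $\omega$ (equivalently, $D\omega\not\equiv0$) is invoked.
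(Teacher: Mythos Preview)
Your proof is correct and follows essentially the same route as the paper: derive $\nabla_X D\omega=\gamma X-b_1\eta(X)\zeta+\tfrac1m(X\omega)D\omega$, compute $K(X,Y)D\omega$, contract and compare with the $\eta$-Einstein value of $Ric(Y,D\omega)$ to obtain your central identity, then case-split using $m\neq1$ and $ma_1\neq(2n-1)b_1$ to reach $D\omega=(\zeta\omega)\zeta$. The one substantive difference is the endgame: the paper compares the full trace of the substituted Hessian identity with its value at $X=\zeta$ to deduce $a_1+b_1=0$ and then invokes the almost co-K\"ahler relation ${\rm tr}\,h^2=-(a_1+b_1)$, whereas you read off the $\ker\eta$-component directly as $(\zeta\omega)\varphi hX=-\gamma X$ and conclude $\gamma=0$ and $h=0$ via ${\rm tr}(\varphi h)=0$ --- a slightly cleaner finish that bypasses that auxiliary trace identity.
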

Solitons are actually waves that physically propagate with some energy loss and maintain their speed and shape after colliding with another wave of a similar kind. Solitons are crucial in the solution of initial-value problems in nonlinear partial differential equations describing wave propagation. Additionally, it describes the Fermi-Pasta-Ulam system's recurrence \cite{fpu}.\par
Interest in studying Ricci solitons and their generalizations in different geometrical contexts has also considerably increased due to their connection to general relativity. Recently, in perfect fluid spacetimes, many authors investigated many type of solitons like Ricci solitons \cite{dms}, gradient Ricci solitons (\cite{dez}, \cite{dms}), Yamabe solitons \cite{de}, gradient Yamabe solitons\cite{dez}, gradient m-quasi Einstein solitons\cite{dez}, gradient $\eta$-Einstein solitons(\cite{dms}), gradient Schouten solitons(\cite{dms}), respectively. Also, physical applications of almost Ricci solitons are investigated in (\cite{dug}, \cite{dug1}). Motivated by the above studies, in this article, we intend to investigate the $(m,\rho)$-quasi-Einstein solitons in perfect fluid spacetimes. As a result, we establish the following theorems.\par	
\begin{theorem}
\label{thm4.1}
 If a perfect fluid spacetime of dimension $n\geq 4$ admits a $(m,\rho)$-quasi Einstein soliton with the scalars $\alpha_{1}$, $\beta$ and the potential function $\omega$ remain invariant under the velocity vector field $\rho$, then either the perfect fluid spacetime has vanishing vorticity or, the spacetime represents dark energy era.
\end{theorem}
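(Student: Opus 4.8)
The plan is to substitute the perfect-fluid form of the Ricci tensor into the soliton equation, use the invariance hypotheses to suppress the derivatives of the structure functions, and then read off the vorticity from a curvature identity.

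First I would record the two structural inputs. Being a perfect fluid of dimension $n\ge 4$, the spacetime has $Ric=\alpha_1 g+\gamma\,\eta\otimes\eta$, where $\eta$ is the $1$-form metrically dual to the unit timelike velocity field $\rho$ and $\gamma$ equals, up to a nonzero constant, $\sigma+p$; thus the dark-energy era is precisely the case $\gamma\equiv 0$. Inserting this into $(\ref{104})$ gives
\[
Hess\,\omega-\tfrac1m\,d\omega\otimes d\omega=(\beta-\alpha_1)\,g-\gamma\,\eta\otimes\eta .
\]
The hypothesis that $\omega$ is invariant under $\rho$ reads $\rho\omega=0$, i.e. $D\omega\perp\rho$, and from the invariance of $\alpha_1,\beta$ together with the trace of $(\ref{104})$ (which couples $\beta,\alpha_1,\gamma$ and the scalar curvature $\tau$) one checks that $\gamma$ is $\rho$-invariant as well. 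Evaluating the displayed identity on $(\rho,X)$ and using $Hess\,\omega(\rho,X)=-g(\nabla_X\rho,D\omega)$ with $\rho\omega=0$, one gets that $g(\nabla_X\rho,D\omega)$ is a fixed scalar multiple of $\eta(X)$, the coefficient being built from $\alpha_1,\beta,\gamma$.

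The core of the argument is a Ricci-type commutation identity. Writing $S(Z):=\nabla_Z D\omega$, the displayed equation determines the $(1,1)$-tensor $S$ explicitly, and $R(X,Y)D\omega=(\nabla_XS)(Y)-(\nabla_YS)(X)$ then expresses $R(X,Y)D\omega$ through $g$, $\eta$, $D\omega$, $\nabla\rho$, $d\eta$ and the gradients of $\alpha_1,\beta,\gamma$. Taking the $\rho$-component, restricting to $X,Y\perp\rho$, and using the $\rho$-invariance of $\alpha_1,\beta,\gamma$ to cancel every gradient term, I expect to arrive at an identity of the form
\[
R(X,Y,D\omega,\rho)=\gamma\,d\eta(X,Y)\qquad(X,Y\perp\rho)
\]
(up to a sign fixed by the signature convention). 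Since $X$, $Y$ and $D\omega$ are all orthogonal to $\rho$, the Ricci part of the left-hand side vanishes -- this is where $n\ge4$ and the decomposition of the curvature tensor into Weyl plus Ricci part are used -- so the identity really compares a component of the Weyl tensor with the fluid's vorticity. The final step is to show that this term is zero, which forces bringing in the finer structure of a perfect-fluid spacetime: that the velocity is an eigenvector of the Ricci operator, and the contracted second Bianchi identity $\operatorname{div} Ric=\tfrac12\,d\tau$, combined with the invariance relations. Granting $R(X,Y,D\omega,\rho)=0$, the identity becomes $\gamma\,d\eta(X,Y)=0$ for all $X,Y\perp\rho$; since the vorticity tensor of the fluid is, up to a factor, the restriction of $d\eta$ to $\rho^{\perp}$, this says that at every point either $\gamma=0$ or the vorticity vanishes, which yields the asserted dichotomy. (The case $D\omega=0$ is immediate: the displayed identity then makes the spacetime Einstein, whence $\gamma=0$.)

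The hard part will be exactly this last step -- proving that $R(X,Y,D\omega,\rho)$ vanishes for $X,Y\perp\rho$. The soliton equation and the perfect-fluid form of $Ric$ by themselves only produce the displayed \emph{identity}; an independent ingredient is indispensable, and forcing its left-hand side to zero must rely on the Bianchi identities, the Ricci-eigenvector structure of the velocity, and the invariance hypotheses. Everything else in the proof is routine tensorial bookkeeping.
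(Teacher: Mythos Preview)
Your approach diverges substantially from the paper's, and the step you yourself flag as ``the hard part'' is both unproven in your sketch and, as it happens, unnecessary.

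The paper does not isolate the component $R(X,Y,D\omega,\rho)$ for $X,Y\perp\rho$. After writing down the same commutation identity for $K(E_1,F_1)D\omega$ (their Lemma~5.1), it simply \emph{contracts} over $E_1$ against an orthonormal frame to obtain a formula for $Ric(F_1,D\omega)$. Setting $F_1=\rho$ and comparing with $Ric(\rho,D\omega)=(\alpha_1-\gamma)\,\rho(\omega)$ from the perfect-fluid form, then imposing the invariance hypotheses $\rho(\alpha_1)=\rho(\beta)=\rho(\omega)=0$, collapses everything to the single scalar relation $\gamma\,\mathrm{div}\,\rho=0$. No Weyl decomposition, no vanishing of an individual curvature component, and no second Bianchi identity enter.

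Your route, by contrast, requires $W(X,Y,D\omega,\rho)=0$ for all $X,Y\perp\rho$: you correctly note that the Ricci and scalar parts of $R(X,Y,D\omega,\rho)$ drop out under these orthogonality constraints, but the Weyl part has no reason to vanish in a general perfect-fluid spacetime admitting such a soliton. The ingredients you invoke --- $\rho$ being a Ricci eigenvector, and the contracted Bianchi identity $\mathrm{div}\,Ric=\tfrac12 d\tau$ --- constrain derivatives of $\alpha_1,\gamma$, not components of the Weyl tensor. This is a genuine gap, not just missing bookkeeping.

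Note also that the paper's dichotomy is $\gamma=0$ versus $\mathrm{div}\,\rho=0$, not versus $d\eta|_{\rho^\perp}=0$; the paper then passes from $\mathrm{div}\,\rho=0$ to ``vanishing vorticity'' by asserting that a divergence-free velocity is conservative and hence irrotational. Whatever one makes of that last step, the point for you is that the traced route reaches the dichotomy in one line, whereas your untraced route stalls at a Weyl-vanishing claim that you cannot establish.
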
	
	
\section{  almost co-K\"ahler manifolds}
$\mathcal{M}^{2n+1}$, a smooth manifold is named an almost contact metric manifold if there exist a vector field $\zeta$, a tensor field $\varphi$, a one-form $\eta$ and the Riemannian metric $g$ satisfying
\be
 \varphi^2+I=\eta \otimes \zeta, \quad \eta(\zeta)=1, \label{201}
\ee
	
\be
 g(E_{1},F_{1})=g(\varphi E_{1},\varphi F_{1})+\eta(E_{1})\eta(F_{1}) \label{202}
 \ee
for all vector fields $E_{1},F_{1}$ on $\mathcal{M}^{2n+1}$. From (\ref{201}), we get $\eta \circ \varphi=0$ and $\varphi \zeta=0$. \par

To each $\mathcal{M}^{2n+1}$ there is an associated 2-form $\Phi(E_{1},F_{1})=g(E_{1},\varphi F_{1})$ for all vector fields $E_{1},F_{1}$ on $\mathcal{M}^{2n+1}$. $\eta \wedge \Phi^n$ is non-vanishing everywhere and hence $\mathcal{M}^{2n+1}$ is orientable. \par
The Riemannian curvature tensor $K$ is described by
	\be K(E_{1},F_{1})=[\nabla_{E_{1}},\nabla_{F_{1}}]-\nabla_{[E_{1},F_{1}]}, \label{203} \ee
in which $\nabla$ denotes the Riemannian connection.\par

The manifold $\mathcal{M}$ is named an almost co-K\"ahler manifold \cite{3} if  $d\eta=0$ and $d\Phi=0$. In particular, an almost co-K\"ahler manifold is called co-K\"ahler if it is normal, that is,  $\nabla \Phi=0$. An almost co-K\"ahler structure is said to be strictly almost co-K\"ahler if it is not a co-K\"ahler structure.\par

On an almost co-K\"ahler manifold $\mathcal{M}^{2n+1}(\eta,\zeta,\varphi,g)$, we set $2h=\pounds_{\zeta} \varphi$ and $h'=h \circ \varphi$. It is known that $h$ and $h'$ are symmetric $(1,1)$-tensor fields and satisfy the following relations (\cite{6}, \cite{7}, \cite{8})
	\be h\zeta=0,\quad \varphi h+h\varphi=0,\quad {\rm tr}\,h=0={\rm tr}\,h', \label{204} \ee
	\be \nabla_\zeta \varphi=0,\quad \nabla \zeta=h',\label{205} \ee
	\be \nabla_\zeta h=-h^2 \varphi-\varphi \ell, \label{206} \ee
	\be \varphi \ell \varphi-\ell=2h^2, \label{207} \ee
	\be Ric(\zeta,\zeta)+{\rm tr}\, h^2=0, \label{208} \ee
where $\ell=K(.,\zeta)\zeta$ indicates the Jacobi operator and ${\rm tr}$ denotes the trace.

\begin{definition}
An almost co-K\"ahler manifold is named a $K$-almost co-K\"ahler manifold if the characteristic vector field $\zeta$ is Killing.
\end{definition}

It is known that, any co-K\"ahler manifold is a $K$-almost co-K\"ahler manifold, but the converse is not true, in general. But it is true in 3-dimensinal manifold.
\begin{lemma}
		\cite{3} Any 3-dimensional almost co-K\"ahler manifold is co-K\"ahler if and only if $\zeta$ is Killing.\label{006}
	\end{lemma}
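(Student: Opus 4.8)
The plan is to prove the two implications separately, the converse being where the dimension hypothesis does the work. For the direct implication, assume $\mathcal{M}^3$ is co-K\"ahler, i.e. $\nabla\Phi=0$, equivalently $\nabla\varphi=0$. From $\varphi\zeta=0$ together with $g(\zeta,\zeta)=1$ one obtains $\varphi\nabla_X\zeta=0$ and $g(\nabla_X\zeta,\zeta)=0$, hence $\nabla\zeta=0$; in particular $\zeta$ is Killing. Equivalently, $\nabla\varphi=0$ forces $2h=\pounds_\zeta\varphi=0$, and then $\nabla\zeta=h'=0$ by (\ref{205}).

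For the converse, I would first show that, on any almost co-K\"ahler manifold, $\zeta$ is Killing if and only if $h=0$. Indeed, by (\ref{205}) the endomorphism $\nabla\zeta=h'$ is $g$-symmetric because $h'$ is a symmetric $(1,1)$-tensor, whereas the Killing condition says exactly that $\nabla\zeta$ be $g$-skew-symmetric; hence $\nabla\zeta=h'=0$. Since $h'=h\circ\varphi$, composing with $\varphi$ and using $\varphi^2=-I+\eta\otimes\zeta$ and $h\zeta=0$ from (\ref{204}) gives $h=0$, and the reverse implication is immediate from (\ref{205}).

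It then remains to deduce co-K\"ahlerity from $h=0$ in dimension three. With $h=0$ we have $\nabla\zeta=0$, so $\zeta$ is parallel, and since $d\eta=0$ the distribution $\mathcal{D}=\ker\eta$ is parallel and integrable; by the de Rham decomposition theorem $\mathcal{M}^3$ is locally an isometric product $\mathbb{R}(\zeta)\times N^2$. On a two-dimensional leaf $N^2$ the triple $(\varphi,g,\Phi)$ restricts to an almost Hermitian structure whose fundamental $2$-form is automatically closed (a surface) and whose almost complex structure is automatically integrable (real dimension two); hence $N^2$ is K\"ahler and $\nabla^{N}\varphi=0$. Combined with $\nabla\zeta=0$ this yields $\nabla\Phi=0$ on $\mathcal{M}^3$, i.e. $\mathcal{M}^3$ is co-K\"ahler.

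The main obstacle is precisely this last step: for $\dim\mathcal{M}\geq 5$ the condition $h=0$ does not imply normality, so the argument must genuinely exploit that a real two-dimensional almost Hermitian manifold is K\"ahler. An alternative to the splitting argument is to compute the Nijenhuis torsion $[\varphi,\varphi]$ directly and invoke that, in dimension three, it is fully determined by $h$ and $d\eta$ --- both zero here --- so that the structure is normal; either route reduces the statement to a routine local computation once $\zeta$ has been shown to be parallel.
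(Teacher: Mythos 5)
Your argument is correct, and it is worth noting that the paper itself offers no proof of this lemma: it is simply quoted from Goldberg--Yano \cite{3}. The standard route behind that citation is computational: the equivalence ``$\zeta$ Killing $\Leftrightarrow h=0$'' (exactly as you prove it, via symmetry of $h'=\nabla\zeta$), followed by the purely three-dimensional identity $(\nabla_{E_{1}}\varphi)F_{1}=g(\varphi\nabla_{E_{1}}\zeta,F_{1})\zeta-\eta(F_{1})\varphi\nabla_{E_{1}}\zeta$, which makes $\nabla\varphi=0$ (equivalently $\nabla\Phi=0$, equivalently normality) immediate once $\nabla\zeta=h'=0$; this is essentially your suggested alternative of expressing the torsion through $h$ and $d\eta$. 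Your main route is genuinely different and more geometric: $h=0$ gives $\nabla\zeta=0$, the local de Rham splitting $\mathbb{R}\times N^{2}$, and then the fact that a two-dimensional almost Hermitian manifold is automatically K\"ahler, which transparently isolates where dimension three enters and why the statement fails for $\dim\mathcal{M}\geq 5$; the computational route is shorter and avoids invoking de Rham. Two small points to tighten: parallelism of $\mathcal{D}=\ker\eta$ comes from $\nabla\zeta=0$ (hence $\nabla\eta=0$), not from $d\eta=0$, which only gives integrability; and in the final gluing step you should say explicitly that the $\zeta$-direction is handled by $\nabla_{\zeta}\varphi=0$ (equation (\ref{205})) or by $\pounds_{\zeta}\varphi=2h=0$, so that $\nabla\Phi=0$ on all of $\mathcal{M}^{3}$ follows from the leaves being K\"ahler and totally geodesic. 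These are presentational, not substantive, gaps.
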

	
\section{ $(\kappa,\mu)$- almost co-K\"ahler manifolds}
	
Let $\mathcal{M}^{2n+1}$ be an almost co-K\"ahler manifold. Then $\mathcal{M}^{2n+1}$ is named a $(\kappa,\mu)$-almost co-K\"ahler manifold if $\xi$ belongs to the $(\kappa,\mu)$-nullity distribution, that is,
\be K(E_{1},F_{1})\xi=\kappa(\eta(F_{1})E_{1}-\eta(E_{1})F_{1})+\mu(\eta(F_{1})hE_{1}-\eta(E_{1})hF_{1}) \label{301} \ee
for all vector fields $E_{1},F_{1}$ on $\mathcal{M}^{2n+1}$ and $(\kappa,\mu)\in \mathcal{R}^2$. Such a manifold was first presented by Endo \cite{1} and were generalized by Dacko and Olszak \cite{22} to $(\kappa,\mu,\nu)$-spaces. Replacing $F_{1}$ by $\xi$ in (\ref{301}), we acquire $\ell=-\kappa \varphi^2+\mu h$. Putting the value of $\ell$ in (\ref{207}) and using second equation of (\ref{204}), we obtain
\be
 h^2=\kappa \varphi^2. \label{302}
\ee
By (\ref{302}), we easily see that $\kappa=0$ and $\kappa \leq 0$  if and only if $\mathcal{M}^{2n+1}$ is a $K$-almost co-K\"ahler manifold. If $\kappa<0$, then $\mathcal{M}^{2n+1}$ is a strictly almost co-K\"ahler manifold. In particular, if $\mu=0$, then $\xi$ belongs to the $\kappa$-nullity distribution $N(\kappa)$ described by
\beas &&N(\kappa) : p \to N_p(\kappa)\\&=&\!\!\! \{G_{1} \in T_p(\mathcal{M}^{2n+1}) : K(E_{1},F_{1})G_{1}\\&&
=\kappa(g(F_{1},G_{1})E_{1}-g(E_{1},G_{1})F_{1}),\, \forall \, E_{1},F_{1} \in T_p(\mathcal{M}^{2n+1})\},
\eeas
where $T_p(\mathcal{M}^{2n+1})$ is the tangent space of $\mathcal{M}^{2n+1}$ at $p\in \mathcal{M}^{2n+1}$ and investigated by Dacko \cite{14}. According to Theorem 4 of \cite{14}, we have

\begin{theorem}
Let $\mathcal{M}^{2n+1}$ be an almost co-K\"ahler manifold with $\kappa<0$. Then $\mathcal{M}^{2n+1}$ is locally isomorphic to a solvable non-nilpotent Lie group $\mathcal{G}_\sigma$ with the almost co-K\"ahler structure $(\eta,\xi,\varphi,g)$, where $\sigma=\sqrt{-\kappa}$ and underlying manifold of $\mathcal{G}_{\sigma}$ is the space $\mathbb{R}^{2n+1}$.\label{007}
\end{theorem}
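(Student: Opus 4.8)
The plan is to prove the statement by producing a local parallelization of $\mathcal{M}^{2n+1}$ whose structure functions are constant, and then invoking the standard fact that such a manifold is locally isomorphic to a Lie group carrying a left-invariant version of the same structure. The engine of the argument is the relation (\ref{302}), $h^{2}=\kappa\varphi^{2}$, read together with the defining conditions $d\eta=0$, $d\Phi=0$ of an almost co-K\"ahler structure and the covariant-derivative formulas (\ref{204})--(\ref{206}).

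First I would analyse the symmetric operator $h$. Since $\varphi^{2}=-I+\eta\otimes\xi$, equation (\ref{302}) yields $h^{2}=-\kappa(I-\eta\otimes\xi)=\sigma^{2}(I-\eta\otimes\xi)$ with $\sigma=\sqrt{-\kappa}>0$; hence on the contact distribution $\mathcal{D}=\ker\eta$ the operator $h$ has precisely the two \emph{constant} eigenvalues $\pm\sigma$, while $h\xi=0$. Writing $\mathcal{D}=\mathcal{D}_{+}\oplus\mathcal{D}_{-}$ for the $\pm\sigma$-eigenbundles, the anticommutation $\varphi h+h\varphi=0$ of (\ref{204}) shows that $\varphi$ interchanges $\mathcal{D}_{+}$ and $\mathcal{D}_{-}$, so $\dim\mathcal{D}_{+}=\dim\mathcal{D}_{-}=n$. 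I would then choose local orthonormal sections $e_{1},\dots,e_{n}$ of $\mathcal{D}_{+}$ and take $\varphi e_{1},\dots,\varphi e_{n}$ as the corresponding orthonormal sections of $\mathcal{D}_{-}$, giving a $\varphi$-adapted orthonormal frame $\{\xi,e_{1},\dots,e_{n},\varphi e_{1},\dots,\varphi e_{n}\}$.

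The core step is to evaluate the Levi-Civita connection, and thence every Lie bracket, in this frame. From $\nabla\xi=h'=h\circ\varphi$ in (\ref{205}) one computes that $\nabla_{X}\xi=\mp\sigma\,\varphi X$ according as $X$ lies in $\mathcal{D}_{\pm}$; metric compatibility then gives $g([\,\cdot\,,\,\cdot\,],\xi)$ for all pairs of frame fields, and a short computation (using (\ref{202})) shows every bracket among $\{e_{i},\varphi e_{j}\}$ has vanishing $\xi$-component and that $[\xi,\mathcal{D}]\subseteq\mathcal{D}$. Thus $\mathcal{D}$ is a codimension-one involutive ideal and $\mathrm{ad}_{\xi}$ acts on $\mathcal{D}$ with the real, nonzero eigenvalues $\pm\sigma$. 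Exploiting $\nabla_{\xi}\varphi=0$, (\ref{206}) and the closedness of $\eta$ and $\Phi$, I would show that all the connection coefficients in this frame are constant multiples of $\sigma$; consequently the frame has constant structure functions and spans a $(2n+1)$-dimensional Lie algebra $\mathfrak{g}_{\sigma}$.

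Finally I would identify $\mathfrak{g}_{\sigma}$ and conclude. Because $\mathrm{ad}_{\xi}$ is a nonzero semisimple endomorphism of $\mathcal{D}$ (eigenvalues $\pm\sigma$), $\mathfrak{g}_{\sigma}$ cannot be nilpotent, whereas $[\mathfrak{g}_{\sigma},\mathfrak{g}_{\sigma}]\subseteq\mathcal{D}$ together with the abelian (or nilpotent) bracket induced on $\mathcal{D}$ forces the derived series to terminate, so $\mathfrak{g}_{\sigma}$ is solvable; it is the Lie algebra $\mathbb{R}\xi\ltimes\mathcal{D}$ of the solvable non-nilpotent group $\mathcal{G}_{\sigma}$. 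Since the structure functions are constant, the chosen frame is (locally) left-invariant and the classical rigidity theorem for parallelizations with constant structure constants yields a local isomorphism of $\mathcal{M}^{2n+1}$, preserving $(\eta,\xi,\varphi,g)$, onto $\mathcal{G}_{\sigma}$ equipped with the induced left-invariant almost co-K\"ahler structure; as a simply-connected solvable group, $\mathcal{G}_{\sigma}$ has underlying manifold $\mathbb{R}^{2n+1}$. The main obstacle is the middle step, namely verifying that the \emph{horizontal} connection coefficients are constant and not merely the $\xi$-directional ones: this is where the full strength of $d\Phi=0$ and the nullity relations (\ref{206})--(\ref{207}) must be used to suppress any position dependence, and it is precisely here that $\kappa<0$, i.e. $\sigma\neq0$, is indispensable.
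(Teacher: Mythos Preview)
The paper does not prove this theorem at all: it is quoted verbatim from Dacko's paper \cite{14} (introduced by ``According to Theorem~4 of \cite{14}, we have''), so there is no in-paper argument to compare against. Your proposal is therefore an attempt to reconstruct Dacko's original proof, and its overall architecture---diagonalise $h$ via (\ref{302}), build a $\varphi$-adapted frame, show the structure functions are constant, and invoke the Lie-group rigidity for parallelizations with constant structure constants---is indeed the correct strategy and is essentially how the cited result is obtained.

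That said, the proposal has a genuine gap at the step you yourself flag. From (\ref{205}) and the symmetry of $h'$ you correctly get that $\mathcal{D}$ is involutive and that the $\xi$-components of all brackets are determined; but the assertion that ``all the connection coefficients in this frame are constant multiples of $\sigma$'' does not follow from $d\Phi=0$ together with (\ref{206})--(\ref{207}) alone. Two things are missing. First, you never invoke the $\kappa$-nullity condition itself, i.e.\ (\ref{301}) with $\mu=0$; equations (\ref{206})--(\ref{207}) are identities valid on \emph{every} almost co-K\"ahler manifold and cannot by themselves force the horizontal curvature to vanish. It is precisely $K(X,Y)\xi=\kappa(\eta(Y)X-\eta(X)Y)$ that, combined with (\ref{205}), yields $(\nabla_{X}h')Y=(\nabla_{Y}h')X$ and then the integrability and total geodesy (indeed flatness) of the eigenbundles $\mathcal{D}_{\pm}$. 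Second, even granting that, an \emph{arbitrary} local orthonormal frame $e_{1},\dots,e_{n}$ of $\mathcal{D}_{+}$ will not have constant structure functions: one must choose the $e_{i}$ as coordinate vector fields on the flat leaves (equivalently, parallel with respect to the induced flat connection), and simultaneously arrange $\nabla_{\xi}e_{i}=0$ using $\nabla_{\xi}h=0$. Only after this specific frame choice do the horizontal brackets vanish and does $\mathrm{ad}_{\xi}$ act with the eigenvalues $\pm\sigma$ you claim. Your concluding identification of $\mathfrak{g}_{\sigma}$ as solvable non-nilpotent is then correct, but it rests on the abelian nature of $\mathcal{D}$, which is exactly the unproved step.
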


\begin{lemma} \cite{4}
Let $\mathcal{M}^{2n+1}$ be a $(\kappa,\mu)$-almost co-K\"ahler manifold with $\kappa<0$. Then the Ricci operator $Q$ is described by
\be Q=2n\kappa \eta \otimes \xi+\mu h. \label{303} \ee
\end{lemma}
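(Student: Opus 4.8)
The plan is to read off $Q$ directly from the defining identity (\ref{301}) and the structure equations (\ref{204})--(\ref{208}), together with the two consequences already recorded in this section, namely $\ell=-\kappa\varphi^2+\mu h$ and $h^2=\kappa\varphi^2$. First I would dispose of the $\xi$-direction: contracting (\ref{301}) over a local orthonormal frame $\{e_i\}$ and using ${\rm tr}\,h=0$, $h\xi=0$ and $\sum_i\eta(e_i)g(E_1,e_i)=\eta(E_1)$ gives $Ric(E_1,\xi)=2n\kappa\,\eta(E_1)$, i.e. $Q\xi=2n\kappa\,\xi$ (this also re-derives (\ref{208}), since $h^2=\kappa\varphi^2$ forces ${\rm tr}\,h^2=\kappa\,{\rm tr}\,\varphi^2=-2n\kappa$). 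Hence (\ref{303}) already holds on $\xi$, and the real content is the assertion $QE_1=\mu hE_1$ for $E_1\in\ker\eta$.

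The next step is to extract the first-order data. Because $\nabla\xi=h'$ by (\ref{205}), the curvature in the $\xi$-direction is a Codazzi-type expression, $K(E_1,F_1)\xi=(\nabla_{E_1}h')F_1-(\nabla_{F_1}h')E_1$; comparing this with (\ref{301}) produces one relation for $\nabla h'$. On the other hand, substituting $\ell=-\kappa\varphi^2+\mu h$ and $h^2=\kappa\varphi^2$ into (\ref{206}) and simplifying with $\varphi h+h\varphi=0$ and $\varphi^3=-\varphi$ gives $\nabla_\xi h=\mu h\varphi=\mu h'$, whence $\nabla_\xi h'=-\mu h$ on $\ker\eta$. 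Using these two relations together with the symmetry and trace-freeness of $h$ and $h'$ and with the integrability constraints coming from $d\Phi=0$, one pins down $(\nabla_{E_1}h)F_1$ completely.

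Now for the distribution. Since $\kappa<0$, (\ref{302}) shows that on $\ker\eta$ the operator $h$ has the two constant nonzero eigenvalues $\pm\sqrt{-\kappa}$, with $\varphi$ interchanging the eigendistributions $\mathcal{D}_{+}$ and $\mathcal{D}_{-}$. With the formula for $\nabla h$ from the previous step I would compute the connection components among $\xi$, $\mathcal{D}_{+}$ and $\mathcal{D}_{-}$, then the full curvature $K(E_1,F_1)G_1$ for $E_1,F_1,G_1$ in these subspaces, and finally contract over an adapted orthonormal frame to obtain $Ric(E_1,F_1)=\mu\,g(hE_1,F_1)$ for $E_1,F_1\in\ker\eta$; combined with the $\xi$-direction this is precisely $Q=2n\kappa\,\eta\otimes\xi+\mu h$. (Alternatively, since by Theorem \ref{007} the manifold is locally the explicit solvable Lie group $\mathcal{G}_\sigma$, $\sigma=\sqrt{-\kappa}$, one may instead carry out this curvature computation in an adapted left-invariant frame there.)

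The hard part will be the middle step. The Codazzi relation for $h'$ by itself does not determine $\nabla h$, so one genuinely needs to combine it with the normalization $\nabla_\xi h=\mu h'$ and with the constraints forced by $d\Phi=0$ before the system closes; once $\nabla h$ is in hand, the curvature computation on the $h$-eigenframe and its contraction to the Ricci tensor are routine bookkeeping.
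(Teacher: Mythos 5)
The paper never proves this lemma: it is quoted verbatim from the survey \cite{4}, so there is no internal argument to compare with and your attempt is a from-scratch reconstruction. Within it, several pieces are sound: contracting (\ref{301}) does give $Q\xi=2n\kappa\,\xi$, the identity $\nabla_\xi h=\mu h'$ follows correctly from (\ref{206}) with $\ell=-\kappa\varphi^2+\mu h$ and $h^2=\kappa\varphi^2$, and the splitting of $\ker\eta$ into the $\pm\sqrt{-\kappa}$-eigendistributions of $h$ (interchanged by $\varphi$) is the right frame in which such computations are done in the literature.

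The genuine gap is the middle step, which you assert rather than establish. The relation $K(E_{1},F_{1})\xi=(\nabla_{E_{1}}h')F_{1}-(\nabla_{F_{1}}h')E_{1}$ prescribes only the part of $g((\nabla_{E_{1}}h')F_{1},G_{1})$ that is skew in $E_{1},F_{1}$; since this trilinear form is symmetric in $F_{1},G_{1}$, a totally symmetric component is left completely free, and neither $d\eta=0$ nor $d\Phi=0$ visibly removes it (those conditions are essentially already spent in obtaining (\ref{205}) and the symmetry of $h'$), nor does the single equation $\nabla_\xi h=\mu h'$. So the claim that these ingredients ``pin down $(\nabla_{E_{1}}h)F_{1}$ completely'' is precisely the crux and is unsupported; the transverse Ricci curvature is not a formal consequence of (\ref{301}) (for $\kappa=0$, for instance, a co-K\"ahler product of a line with a non-Ricci-flat K\"ahler manifold satisfies $K(E_{1},F_{1})\xi=0$ while $Q\neq 2n\kappa\,\eta\otimes\xi+\mu h$), so any correct argument must inject further input that uses $\kappa<0$ essentially --- in practice either covariant differentiation of (\ref{301}) combined with the Ricci identity for $h$ and the second Bianchi identity, as in the contact-metric $(\kappa,\mu)$ theory, or the Dacko--Olszak reduction of a $(\kappa,\mu)$-space to an $N(\kappa)$-space and its explicit model \cite{22}, \cite{14}. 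Finally, your parenthetical fallback is circular: Theorem \ref{007} is Dacko's theorem for $\xi$ in the $\kappa$-nullity distribution, i.e.\ $\mu=0$; for the general $(\kappa,\mu)$-space treated by the lemma you cannot invoke it (in this paper the passage to $\mu=0$ is a consequence of the soliton hypothesis, not available here), so the curvature computation cannot be outsourced to the Lie group $\mathcal{G}_\sigma$.
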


Thought the paper we assume $(\kappa,\mu)$-almost co-K\"ahler manifolds in the sense of Endo \cite{1}, that is $\kappa$ and $\mu$ are constant.

\section{ Proof of the results}
	Naturally, there arises a question regarding the existence of Theorem \ref{007} for $(\kappa,\mu)$-almost co-K\"ahler manifolds with $(m,\rho)$-quasi-Einstein solitons. The affirmative answer of the above question is given by Theorem \ref{001}.\par

	{\bf Proof of the Theorem \ref{001}.}
From (\ref{303}), we infer that $\tau=2n\kappa={\rm constant}$.\\ By virtue of (\ref{104}) and (\ref{303}), we obtain
\be \nabla_{E_{1}} D\omega=(2n\kappa \rho+\lambda_{1})E_{1}-\mu hE_{1}-2n\kappa \eta(E_{1})\zeta+\frac{1}{m}g(E_{1},D\omega)D\omega \label{304} \ee
for all vector field $E_{1}$ on $\mathcal{M}^{2n+1}$. Taking covariant derivative of (\ref{304}) along $F_{1}$ and using (\ref{205}), we get
\bea
\nabla_{F_{1}}\nabla_{E_{1}} D\omega&=&(2n\kappa \rho+\lambda_{1})\nabla_{F_{1}} E_{1}-\mu\nabla_{F_{1}}(hE_{1})\nonumber\\&-&2n\kappa (F_{1}\eta(E_{1}))\zeta-2n\kappa \eta(E_{1})h'F_{1}\nonumber\\&+&\frac{1}{m}[g(\nabla_{F_{1}}E_{1},D\omega)D\omega+g(E_{1},\nabla_{F_{1}} D\omega)D\omega \nonumber\\&+&g(E_{1},D\omega)\nabla_{F_{1}} D\omega]. \label{305}
\eea
Interchanging $E_{1}$ and $F_{1}$ in the above equation, we have
\bea
\nabla_{E_{1}}\nabla_{F_{1}} D\omega&=&(2n\kappa \rho+\lambda_{1})\nabla_{E_{1}} F_{1}-\mu\nabla_{E_{1}}(hF_{1})\nonumber\\&-&2n\kappa (E_{1}\eta(F_{1}))\zeta-2n\kappa \eta(F_{1})h'E_{1}\nonumber\\&+&\frac{1}{m}[g(\nabla_{E_{1}}F_{1},D\omega)D\omega+g(F_{1},\nabla_{E_{1}} D\omega)D\omega \nonumber\\&+&g(F_{1},D\omega)\nabla_{E_{1}} D\omega]. \label{306}
\eea

Using (\ref{304}), (\ref{305}) and (\ref{306}) in (\ref{203}), we infer that
\bea
K(E_{1},F_{1})D\omega&=&-\mu(\nabla_{E_{1}} h)F_{1}+\mu (\nabla_{F_{1}} h)E_{1}\nonumber\\&-&2n\kappa(\eta(F_{1})h'E_{1}-\eta(E_{1})h'F_{1})\nonumber\\&+&\frac{2n\kappa\rho+\lambda_{1}}{m}((F_{1}\omega)E_{1}-(E_{1}\omega)F_{1})\nonumber\\&-&\frac{\mu}{m}((F_{1}\omega)hE_{1}-(E_{1}\omega)hF_{1})\nonumber\\&-&\frac{2n\kappa}{m}((F_{1}\omega)\eta(E_{1})-(E_{1}\omega)\eta(F_{1}))\zeta. \label{307}
 \eea
Taking inner product of the previous equation with $\zeta$ and utilizing (\ref{302}), we obtain
	\bea g(K(E_{1},F_{1})D\omega,\zeta)=2\mu \kappa g(E_{1},\varphi F_{1})+\frac{2n\kappa(\rho-1)+\lambda_{1}}{m}((F_{1}\omega)\eta(E_{1})-(E_{1}\omega)\eta(F_{1})). \label{308} \eea
From (\ref{301}) and (\ref{308}), it follows that
	\bea &&\kappa((F_{1}\omega)\eta(E_{1})-(E_{1}\omega)\eta(F_{1}))+\mu(g(hF_{1},D\omega)\eta(E_{1})-g(hE_{1},D\omega)\eta(F_{1})) \nonumber\\&=&2\mu \kappa g(E_{1},\varphi F_{1})+\frac{2n\kappa(\rho-1)+\lambda_{1}}{m}((F_{1}\omega)\eta(E_{1})-(E_{1}\omega)\eta(F_{1})). \label{309} \eea
	Replacing $E_{1}$ by $\varphi E_{1}$ and $F_{1}$ by $\varphi F_{1}$ in the foregoing equation, we get
	$$0=2\mu\kappa g(E_{1},\varphi F_{1})$$
	which implies that $\mu=0$, because $\kappa<0$.\par
If $\mu=0$, the $(\kappa,\mu)$-almost co-K\"ahler manifold reduces to a $N(\kappa)$-almost co-K\"ahler manifold.\par

Here $\zeta$ belongs to $\kappa$-nullity distribution with $\kappa<0$. By Theorem \ref{007} $\mathcal{M}^{2n+1}$ is locally isomorphic to a solvable non-nilpotent Lie group $\mathcal{G}_\sigma$, where $\sigma=\sqrt{-\kappa}$.\par

Thus the proof is completed.\par

It is well known that on any 3-dimensional manifold, the curvature tensor is described by
\bea
K(E_{1},F_{1})G_{1}&=&Ric(F_{1},G_{1})E_{1}-Ric(E_{1},G_{1})F_{1}+g(F_{1},G_{1})QE_{1}-g(E_{1},G_{1})QF_{1}\nonumber\\&-&\frac{\tau}{2}(g(F_{1},G_{1})E_{1}-g(E_{1},G_{1})F_{1}) \label{313}
\eea
for all vector fields $E_{1},F_{1},G_{1}$ on $\mathcal{M}^3$.\par
 Taking trace of (\ref{301}), we get $Q\zeta=2\kappa \zeta$. Putting $F_{1}=G_{1}=\zeta$ in (\ref{313}) and using $\ell=-\kappa \varphi^2+\mu h$, we obtain
	\be Q=\left(\frac{\tau}{2}-\kappa\right)I+\left(3\kappa-\frac{\tau}{2}\right)\eta \otimes \zeta+\mu h. \label{314} \ee
This is the expression of the Ricci operator $Q$ on any 3-dimensional $(\kappa,\mu)$-almost co-K\"ahler manifold.\par

	{\bf Proof of the Theorem \ref{002}.} Using (\ref{314}) in (\ref{104}), we get
\beas
\nabla_{E_{1}} D\omega&=&\left(\rho \tau+\lambda_{1}-\frac{\tau}{2}+\kappa\right)E_{1}+\left(\frac{\tau}{2}-3\kappa\right)\eta(E_{1})\zeta\nonumber\\&-&\mu hE_{1}+ \frac{1}{m}g(E_{1},D\omega)D\omega
\eeas
for all $E_{1}$ on $\mathcal{M}^3$. By direct computation, we have
\bea
 K(E_{1},F_{1})D\omega&=&\left(\rho -\frac{1}{2}\right)(E_{1}\tau)F_{1}-\left(\rho-\frac{1}{2}\right)(F_{1}\tau)E_{1}\nonumber\\&+&\frac{1}{2}(E_{1}\tau)\eta(F_{1})\zeta-\frac{1}{2}(F_{1}\tau)\eta(E_{1})\zeta\nonumber\\&+&\left(\frac{\tau}{2}-3\kappa\right)(\eta(F_{1})h'E_{1}-\eta(E_{1})h'F_{1})\nonumber\\&-&-\mu (\nabla_{E_{1}} h)F_{1}+\mu (\nabla_{F_{1}} h)E_{1}\nonumber\\&+&\frac{1}{m}\left(\rho \tau+\lambda_{1}-\frac{\tau}{2}+\kappa\right)((F_{1}\omega)E_{1}-(E_{1}\omega)F_{1})\nonumber\\&+&\frac{1}{m}\left(\frac{\tau}{2}-3\kappa\right)((F_{1}\omega)\eta(E_{1})-(E_{1}\omega)\eta(F_{1}))\nonumber\\&-&\frac{\mu}{m}((F_{1}\omega)h(E_{1})-(E_{1}\omega)h(F_{1})) \label{315}
\eea
for all $E_{1},F_{1}$ on $\mathcal{M}^3$. Taking inner product of (\ref{315}) with $\zeta$, we obtain
\bea
g(K(E_{1},F_{1})Df,\zeta)&=&\rho (E_{1}\tau)\eta(F_{1})-\rho (F_{1}\tau)\eta(E_{1})+ 2\mu \kappa g(E_{1},\varphi F_{1})\nonumber\\&+&\frac{1}{m}(\rho \tau+\lambda_{1}-2\kappa)((F_{1}\omega)\eta(E_{1})-(E_{1}\omega)\eta(F_{1})), \label{316}
\eea
where we have used (\ref{302}). By virtue of (\ref{301}) and (\ref{316}), we have
\bea  &&\kappa((F_{1}\omega)\eta(E_{1})-(E_{1}\omega)\eta(F_{1}))+\mu(g(hF_{1},Df)\eta(E_{1})-g(hE_{1},Df)\eta(F_{1}))\nonumber\\&=&\rho (E_{1}\tau)\eta(F_{1})-\rho (F_{1}\tau)\eta(E_{1})+ 2\mu \kappa g(E_{1},\varphi F_{1})\nonumber\\&+&\frac{1}{m}(\rho \tau+\lambda_{1}-2\kappa)((F_{1}\omega)\eta(E_{1})-(E_{1}\omega)\eta(F_{1})). \label{317}
\eea
Now, replacing $E_{1}$ and $F_{1}$ by $\varphi E_{1}$ and $\varphi F_{1}$, respectively, in (\ref{317}) we get
	$$0=2\mu \kappa g(E_{1},\varphi F_{1}),$$
which implies that $\mu=0$, because $\kappa<0$.\par

If $\mu=0$, the $(\kappa,\mu)$-almost co-K\"ahler manifold turns into a $N(\kappa)$-almost co-K\"ahler manifold.\par

By Theorem \ref{007} $\mathcal{M}^3$ is locally isometric to a solvable non-nilpotent Lie group $\mathcal{G}_\sigma$, where $\sigma=\sqrt{-\kappa}$. Since $\mu=0$, from (\ref{314}) it follows that $\mathcal{M}^3$ is an $\eta$-Einstein manifold.\par

Hence the proof.\par

{\bf Proof of the Theorem \ref{003}.} From (\ref{303}), we acquire that the scalar curvature $\tau=2n\kappa$.
	Differentiating $D\omega=(\zeta \omega)\zeta$ along the arbitrary vector field $E_{1}$, we get
	\be \nabla_{E_{1}} D\omega=(E_{1}(\zeta \omega))\zeta+(\zeta \omega)h'E_{1}. \label{319} \ee
	By (\ref{303}) and (\ref{319}), equation (\ref{104}) takes the form
	\bea &&2n\kappa \eta(E_{1})\eta(F_{1})+\mu g(hE_{1},F_{1})+(E_{1}(\zeta \omega))\eta(F_{1})+(\zeta \omega)g(h'E_{1},F_{1})\nonumber\\&-&\frac{1}{m}(\zeta \omega)\eta(E_{1})\eta(F_{1})=(2n\kappa\rho+\lambda_{1})g(E_{1},F_{1}) \label{320} \eea
	for all vector fields $E_{1},F_{1}$ on $\mathcal{M}^{2n+1}$. Replacing $E_{1}$ by $\varphi E_{1}$ and $F_{1}$ by $\varphi F_{1}$, we obtain
	$$ -\mu g(hE_{1},F_{1})-(\zeta f)g(h'E_{1},F_{1})=(2n\kappa\rho+\lambda_{1})g(\varphi E_{1},\varphi F_{1}). $$
	Contracting the preceding equation, we get
	\be 2n\kappa\rho+\lambda_{1}=0.\label{800}\ee
	Setting $E_{1}=F_{1}=\zeta$ in (\ref{320}) and using (\ref{800}), we have
	\be 2n\kappa+\zeta(\zeta \omega)-\frac{1}{m}(\zeta \omega)^2=0. \label{321} \ee
	Contracting in (\ref{319}), we find
	\be \Delta \omega+\zeta(\zeta \omega)=0, \label{322} \ee
	where $\Delta=-{\rm div}\,D$ is the Laplacian operator.
	Using (\ref{322}) in (\ref{321}), we obtain
	$$\Delta \omega=2n\kappa-\frac{1}{m}(\zeta \omega)^2.$$
	By divergence theorem
	$$\frac{1}{m}\int_{\mathcal{M}} (\zeta \omega)^2 d\mathcal{M}=2n\kappa \int_{\mathcal{M}} d\mathcal{M}.$$
$d\mathcal{M}$ represents the volume form of the manifold and is positive, since $\mathcal{M}$ is orientable. Therefore, the right hand side is negative, because $\kappa <0$. Thus, the above relation does not hold.\par

 This completes the proof.\par

{\bf $\eta$-Einstein  almost co-K\"ahler manifolds:}

An almost co-K\"ahler manifold $(\mathcal{M}^{2n+1},g)$ is named an $\eta$-Einstein manifold if
\be
 Ric=a_{1}g+b_{1}\eta \otimes \eta, \label{401}
\ee
where $a_{1}$ and $b_{1}$ are smooth functions. For $b_{1}=0$, it reduces to an Einstein manifold. In particular, if both $a_{1}$ and $b_{1}$ are constant, then we say that $\mathcal{M}^{2n+1}$ satisfies $\eta$-Einstein condition with constant coefficients. From (\ref{208}), we have
\be
a_{1}+b_{1}=-{\rm tr}\, h^2. \label{402}
\ee
The $\eta$-Einstein condition on an almost co-K\"ahler manifold implies that $\zeta$ is a harmonic vector field \cite{9}.\par

{\bf Proof of the Theorem \ref{004}.} From (\ref{401}), we infer that scalar curvature $\tau=(2n+1)a_{1}+b_{1}={\rm constant}$.
By virtue of (\ref{401}), we obtain from (\ref{104})
\be
\nabla_{E_{1}} D\omega=(\rho \tau+\lambda_{1}-a_{1})E_{1}-b_{1}\eta(E_{1})\zeta+\frac{1}{m}g(E_{1},D\omega)D\omega \label{403}
\ee
for all vector field  $E_{1}$ on $\mathcal{M}^{2n+1}$. Using this we compute
\bea
 K(E_{1},F_{1})D\omega&=&-b_{1}\eta(F_{1})h'E_{1}+b_{1}\eta(E_{1})h'F_{1}\nonumber\\&+&\frac{1}{m}(\rho \tau+\lambda_{1}-a_{1})((F_{1}\omega)E_{1}-(E_{1}\omega)F_{1})\nonumber\\&-&\frac{b_{1}}{m}((F_{1}\omega)\eta(E_{1})-(E_{1}\omega)\eta(F_{1}))\zeta. \label{404}
 \eea
Contracting (\ref{404}), we get
\be
 Ric(F_{1},D\omega)=\frac{2n(\rho \tau+\lambda_{1}-a_{1})}{m}(F_{1}\omega)-\frac{b_{1}}{m}((F_{1}\omega)-(\zeta \omega)\eta(F_{1})), \label{405}
\ee
where we have used ${\rm tr}\, h'=0$.
From (\ref{401}) and (\ref{405}), it can be written as
\be
 a_{1}(F_{1}\omega)+b_{1}(\zeta \omega)\eta(F_{1})=\frac{2n(\rho \tau+\lambda_{1}-a_{1})}{m}(F_{1}\omega)-\frac{b_{1}}{m}((F_{1}\omega)-(\zeta \omega)\eta(F_{1})). \label{406}
\ee
Replacing $F_{1}$ by $\zeta$ in (\ref{406}), we have
	$$\left(a_{1}+b_{1}-\frac{2n(\rho \tau+\lambda_{1}-a_{1})}{m}\right)(\zeta \omega)=0.$$
From the preceding equation, we infer either $a_{1}+b_{1}-\displaystyle\frac{2n(\rho \tau+\lambda_{1}-a_{1})}{m}=0$, or $\zeta \omega=0$.\par

If $a_{1}+b_{1}-\displaystyle\frac{2n(\rho \tau+\lambda_{1}-a_{1})}{m}=0$, from (\ref{406}) we can find
$$ \frac{b_{1}(1-m)}{m}((F_{1}\omega)-(\zeta \omega)\eta(F_{1}))=0, $$
which implies $D\omega=(\zeta \omega)\zeta$, since $b_{1}\neq 0$ and $m\neq 1$. Putting $D\omega=(\zeta \omega)\zeta$ in (\ref{403}) and using (\ref{205}), we get
\be
(E_{1}(\zeta \omega))\zeta+(\zeta \omega)h'E_{1}=\frac{m(a_{1}+b_{1})}{2n}E_{1}-b_{1}\eta(E_{1})\zeta+\frac{1}{m}(\zeta \omega)^2\eta(E_{1})\zeta. \label{407}
\ee
Taking trace of (\ref{407}), we have
\be
\zeta(\zeta \omega)=\frac{m(a_{1}+b_{1})}{2n}(2n+1)-b_{1}+\frac{1}{m}(\zeta \omega)^2.
\ee
Setting $E_{1}=\zeta$ in (\ref{407}) and after that taking inner product, we acquire
\be
 \zeta(\zeta \omega)=\frac{m(a_{1}+b_{1})}{2n}-b_{1}+\frac{1}{m}(\zeta \omega)^2.
 \ee
The foregoing two equations imply that $a_{1}+b_{1}=0$. From (\ref{402}), ${\rm tr}\,h^2=0$. Since $h$ is a symmetric tensor, $h=0$. Hence $\zeta$ is a Killing vector field. Consequently, $\mathcal{M}^{2n+1}$ is a $K$-almost co-K\"ahler manifold.\par

On the other hand if $\zeta \omega=0$, then from (\ref{406}) we get
	\be \left(a_{1}-\frac{2n(\rho \tau+\lambda_{1}-a_{1})}{m}+\frac{b_{1}}{m}\right)(F_{1}\omega)=0. \label{410} \ee
If $F_{1}\omega=0$, then $\omega$ is a constant and from (\ref{104}) we see that the manifold is Einstein, which is a contradiction because $b_{1}\neq 0$. Therefore, from (\ref{410}) we observed that
	\be a_{1}-\frac{2n(\rho \tau+\lambda_{1}-a_{1})}{m}+\frac{b_{1}}{m}=0. \label{411} \ee
	Putting $E_{1}=\zeta$ in (\ref{403}) and then taking inner product with $\zeta$, we get
	\be \rho \tau+\lambda_{1}-a_{1}-b_{1}=0, \label{412} \ee
where we have used $\nabla_\zeta \zeta=0$. From (\ref{411}) and (\ref{412}), it follows that $ma_{1}=(2n-1)b_{1}$.\par
Thus the proof is finished.\par

Using the Lemma \ref{006}, we can state that
\begin{corollary}
If an almost co-K\"ahler manifold $\mathcal{M}^3$ satisfies $\eta$-Einstein condition $Ric=a_{1}g+b_{1}\eta \otimes \eta$, $b_{1}\neq 0$ with constant coefficients is a $(m,\rho)$-quasi-Einstein manifold and $m \neq 1$, then $\mathcal{M}^3$ is a co-K\"ahler manifold, provided $ma_{1} \neq b_{1}$.
\end{corollary}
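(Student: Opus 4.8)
The plan is to obtain the corollary as a direct specialization of Theorem \ref{004} to dimension three, followed by an application of Lemma \ref{006} to upgrade the conclusion from ``$K$-almost co-K\"ahler'' to ``co-K\"ahler''. No new computation is needed; the work is entirely in the bookkeeping of the dimension-dependent constant and in stringing together the two cited results.

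First I would set $2n+1=3$, so that $n=1$. With this choice the hypothesis $ma_{1}\neq(2n-1)b_{1}$ appearing in Theorem \ref{004} becomes precisely $ma_{1}\neq b_{1}$, which is the exact hypothesis assumed in the corollary. All remaining hypotheses of Theorem \ref{004} — namely that $\mathcal{M}^{3}$ satisfies the $\eta$-Einstein condition $Ric=a_{1}g+b_{1}\eta\otimes\eta$ with $b_{1}\neq0$ and $a_{1},b_{1}$ constant, that the metric carries a $(m,\rho)$-quasi-Einstein structure, and that $m\neq1$ — are inherited verbatim. Hence Theorem \ref{004} applies and gives that $\mathcal{M}^{3}$ is a $K$-almost co-K\"ahler manifold; equivalently, the characteristic vector field $\zeta$ is Killing.

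To finish, I would invoke Lemma \ref{006} of Section 2: a three-dimensional almost co-K\"ahler manifold is co-K\"ahler if and only if $\zeta$ is Killing. Since the previous step yields that $\zeta$ is Killing, Lemma \ref{006} immediately gives that $\mathcal{M}^{3}$ is co-K\"ahler, which is the assertion of the corollary.

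There is essentially no genuine obstacle here; the statement is a corollary in the strict sense. The only point requiring care is the translation of the constant $(2n-1)$ into dimension three (it collapses to $1$, matching $ma_{1}\neq b_{1}$), and the observation that the implication chain ``$(m,\rho)$-quasi-Einstein together with the $\eta$-Einstein condition with constant coefficients $\Rightarrow$ $\zeta$ Killing $\Rightarrow$ co-K\"ahler'' closes up, the last link being exactly Lemma \ref{006}. One should also note that the case $\zeta\omega=0$ from the proof of Theorem \ref{004} is excluded here for the same reason as there, since it would force $ma_{1}=(2n-1)b_{1}=b_{1}$, contradicting the hypothesis.
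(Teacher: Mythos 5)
Your proposal is correct and matches the paper's own argument: the corollary is obtained by specializing Theorem \ref{004} to $2n+1=3$ (so $(2n-1)b_{1}=b_{1}$) and then upgrading ``$\zeta$ Killing'' to ``co-K\"ahler'' via Lemma \ref{006}. Nothing further is needed.
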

By Lemma 5 of \cite{14}, if $\mathcal{M}^{2n+1}$ is an almost co-K\"ahler manifold whose structure vector field $\zeta$ belongs to the $\kappa$-nullity distribution with $\kappa<0$, then $\mathcal{M}$ satisfies $\eta$-Einstein condition with constant coefficients with $a_{1}=0$ and $b_{1}=2n\kappa \neq 0$. Also $ma_{1} \neq (2n-1)b_{1}$. Thus, from Theorem \ref{004}, we can state that
\begin{corollary}
There does not exist $(m,\rho)$-quasi-Einstein structure with $m \neq 1$ on an almost co-K\"ahler manifold whose structure vector field belongs to $\kappa$-nullity distribution with $\kappa<0$.
\end{corollary}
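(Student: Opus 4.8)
The plan is to obtain this corollary as a direct consequence of Theorem \ref{004} combined with the structural result of Lemma 5 of \cite{14}, together with the elementary incompatibility between $K$-almost co-K\"ahlerity and $\kappa<0$ recorded after (\ref{302}). First I would invoke Lemma 5 of \cite{14}: if $\mathcal{M}^{2n+1}$ is an almost co-K\"ahler manifold whose characteristic vector field $\zeta$ lies in the $\kappa$-nullity distribution with $\kappa<0$, then $\mathcal{M}^{2n+1}$ satisfies the $\eta$-Einstein condition $Ric=a_{1}g+b_{1}\eta\otimes\eta$ with \emph{constant} coefficients $a_{1}=0$ and $b_{1}=2n\kappa$. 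Since $\kappa<0$, we have $b_{1}\neq 0$, so the standing hypotheses of Theorem \ref{004} on $(a_{1},b_{1})$ are satisfied.

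Next I would check the side condition $ma_{1}\neq(2n-1)b_{1}$ required by Theorem \ref{004}. With $a_{1}=0$ and $b_{1}=2n\kappa$ this reduces to $0\neq 2n(2n-1)\kappa$, which holds for every $n\geq 1$ because $\kappa<0$. Hence, arguing by contradiction, suppose a $(m,\rho)$-quasi-Einstein structure $(g,\omega,\lambda_{1})$ with $m\neq 1$ existed on $\mathcal{M}^{2n+1}$; then Theorem \ref{004} would force $\mathcal{M}^{2n+1}$ to be a $K$-almost co-K\"ahler manifold.

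Finally I would close the argument via (\ref{204}), (\ref{205}) and (\ref{302}). In a $K$-almost co-K\"ahler manifold $\zeta$ is Killing, so the symmetric tensor $h'=\nabla\zeta$ vanishes; by the anticommutation relation in (\ref{204}) this yields $h=0$ as well, hence $h^{2}=0$. On the other hand (\ref{302}) gives $h^{2}=\kappa\varphi^{2}$, and $\varphi^{2}=-I+\eta\otimes\zeta$ is nonzero on the $2n$-dimensional contact distribution while $\kappa<0$, so $\kappa\varphi^{2}\neq 0$ — a contradiction. (Equivalently, one may simply cite the fact noted just after (\ref{302}) that a $(\kappa,\mu)$-almost co-K\"ahler manifold is $K$-almost co-K\"ahler precisely when $\kappa=0$.) This contradiction proves that no such $(m,\rho)$-quasi-Einstein structure exists. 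I do not expect any genuine obstacle here: the only points demanding care are matching $(a_{1},b_{1})=(0,2n\kappa)$ and the inequality $ma_{1}\neq(2n-1)b_{1}$ to the hypotheses of Theorem \ref{004}; once that is in place the conclusion is immediate.
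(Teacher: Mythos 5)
Your proposal is correct and follows essentially the same route as the paper: invoke Lemma 5 of \cite{14} to get the $\eta$-Einstein condition with $a_{1}=0$, $b_{1}=2n\kappa\neq 0$, verify $ma_{1}\neq(2n-1)b_{1}$, and apply Theorem \ref{004} to conclude $K$-almost co-K\"ahlerity, which is incompatible with $\kappa<0$. The only difference is cosmetic: you spell out the final contradiction (via $h=0$ versus $h^{2}=\kappa\varphi^{2}\neq 0$) that the paper leaves implicit after (\ref{302}).
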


\section{ Applications to physics}
A Lorentzian manifold $\mathcal{N}^{n}$ is called a perfect fluid spacetime if its non-vanishing Ricci tensor $Ric$ obeys
\begin{equation}
\label{1}
Ric=\alpha_{1} g+ \beta_{1} A \otimes A,
\end{equation}
where $\alpha_{1}$, $\beta_{1}$ are scalar (not simultaneously zero) and for all $E_{1}$, $g(E_{1}, \rho)=A(E_{1})$, $\rho$ being the velocity vector field. We know that in a perfect fluid spacetime, the vector field $\rho$ is unit timelike, hence, we have $g(\rho, \rho)=-1$. \par

From (\ref{1}), we get
\begin{equation}
\label{3}
Q E_{1}=\alpha_{1} E_{1}+\beta_{1} A(E_{1}) \rho,
\end{equation}
where $Q$ is the Ricci operator defined by $g(QE_{1}, F_{1})=Ric(E_{1},F_{1})$
and contracting $E_{1}$ in (\ref{3}), we infer
\begin{equation}
\label{2}
r=\sum_{i} \epsilon_{i}Q E_{i}=n \alpha_{1}-\beta_{1},
\end{equation}

where $\{E_{i}\}$ is an orthonormal basis of the tangent space at each point of the spacetime and $\epsilon_{i}=g(E_i, E_i) =\pm1$.
The covariant derivative of (\ref{3}) gives
\begin{equation}
\label{5b}
(\nabla_{E_{1}}Q)(F_{1})=E_{1} (\alpha_{1})F_{1}+E_{1}(\beta_{1}) A(F_{1})\rho+\beta_{1} (\nabla_{E_{1}}A)(F_{1})\rho+\beta_{1} A(F_{1}) \nabla_{E_{1}}\rho.
\end{equation}

If $k$ denotes the gravitational constant and $T$ indicates the energy momentum tensor, then Einstein's field equations in the absence of cosmological constant have the structure
\begin{equation}
\label{1.2}
Ric-\frac{r}{2}g=k T.
\end{equation}
In case of perfect fluid spacetime if $p$ indicates isotropic pressure and $\sigma$ is the energy density, then $T$ is defined as
\begin{equation}
\label{1.1}
T=(p+\sigma)A \otimes A+p g.
\end{equation}

The necessary and sufficient condition for the constant scalar curvature of a perfect fluid spacetime is that $n E_{1}(\alpha_{1})=E_{1}(\beta_{1}).$
Combining the equations (\ref{1}), (\ref{1.2}) and (\ref{1.1}), we infer that
\begin{equation}
\label{1.7}
\alpha_{1}=\frac{k(p-\sigma)}{2-n},\;\beta_{1}=\kappa(p+\sigma).
\end{equation}

Furthermore, characterizing the particular sort of perfect fluid spacetimes, $\sigma$ and $p$ are interconnected by the relation $p = \Omega \sigma $ in which $\Omega$ is named the equation of state (EOS) parameter. If the state equation is of the form $p = p(\sigma )$, then the perfect fluid spacetime is called isentropic. Also, if $p = \sigma$, then the perfect fluid spacetime represents stiff matter fluid\cite{ch1}. The equation of state of stiff matter fluid was introduced by Zeldovich \cite{ze} to describe a cold gas of baryons, and used in his cosmological model.\par
A dust solution, in general relativity, is one in which the gravitational field is totally produced by the mass, momentum, and stress density of a perfect fluid with positive mass density but vanishing pressure. This is a form of exact solution to the Einstein field equation. If $p = 0$, perfect fluid spacetime represents the dust matter fluid whereas the perfect fluid spacetime represents the radiation era with $p -\frac{\sigma}{3}=0$ and the dark energy era with $p=-\sigma $ \cite{ch1}.\par
Moreover, Dark energy era that satisfies the equation of state with $\Omega < -1$ is known as phantom regime or phantom energy. It has positive energy density but negative pressure, such that $p+\sigma < 0$.
The physical consequences are explored in (\cite{cal}, \cite{cal1}). \par

{\bf Proof of the Theorem \ref{thm4.1} .}

Let the perfect fluid spacetime of dimension $n\geq 4$ admit a $(m,\rho)$-quasi Einstein metric and at first, we prove the following result
\begin{lemma}\label{lem1}
Every perfect fluid spacetime of dimension $n\geq 4$ obeying $(m,\rho)$-quasi Einstein soliton satisfies the following:
\begin{eqnarray}\label{k2}
&&K(E_{1},F_{1})D \omega = (\nabla_{F_{1}}Q)E_{1}-(\nabla_{E_{1}} Q)F_{1}+\frac{\beta}{m}\{ F_{1}( \omega)E_{1}-E_{1} (\omega)F_{1}\}\nonumber\\&&
+\frac{1}{m}\{ E_{1} (\omega)Q F_{1}-F_{1} (\omega)Q E_{1} \}+\{(E_{1}\beta)F_{1}-(F_{1}\beta)E_{1}\},
\end{eqnarray}
for all $E_{1}, \, F_{1}$.
\end{lemma}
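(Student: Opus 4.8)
The plan is to derive the stated curvature identity directly from the $(m,\rho)$-quasi-Einstein soliton equation (\ref{104}) by the same covariant-differentiation bookkeeping used in the proofs of Theorems \ref{001} and \ref{002}. First I would rewrite (\ref{104}) in operator form as $\nabla_{E_{1}}D\omega = \beta E_{1} - QE_{1} + \frac{1}{m}g(E_{1},D\omega)D\omega$, where $\beta = \rho\tau + \lambda_{1}$; note that here, unlike in the $(\kappa,\mu)$ setting, $\tau$ need not be constant, so $\beta$ is a genuine function and the term $(E_{1}\beta)F_{1}$ will survive in the final identity.

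Next I would take the covariant derivative of this expression along $F_{1}$, obtaining
\begin{eqnarray*}
\nabla_{F_{1}}\nabla_{E_{1}}D\omega &=& (F_{1}\beta)E_{1} + \beta\nabla_{F_{1}}E_{1} - (\nabla_{F_{1}}Q)E_{1} - Q(\nabla_{F_{1}}E_{1})\\
&& + \frac{1}{m}\big[g(\nabla_{F_{1}}E_{1},D\omega)D\omega + g(E_{1},\nabla_{F_{1}}D\omega)D\omega + g(E_{1},D\omega)\nabla_{F_{1}}D\omega\big],
\end{eqnarray*}
then interchange $E_{1}$ and $F_{1}$, and subtract, finally removing the $\nabla_{[E_{1},F_{1}]}D\omega$ contribution via (\ref{203}). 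The symmetric $\frac{1}{m}$-terms of the form $g(\nabla_{F_{1}}E_{1},D\omega)D\omega$ cancel in the difference. The term $g(E_{1},\nabla_{F_{1}}D\omega)D\omega$ must be expanded using the operator form again: $g(E_{1},\nabla_{F_{1}}D\omega) = \beta g(E_{1},F_{1}) - g(QE_{1},F_{1}) + \frac{1}{m}(E_{1}\omega)(F_{1}\omega)$; the $\beta g(E_{1},F_{1})$ and $\frac{1}{m}(E_{1}\omega)(F_{1}\omega)$ pieces are symmetric in $E_{1},F_{1}$ and so drop out, leaving exactly the $-\frac{1}{m}g(QE_{1},F_{1})D\omega$ contribution which, combined with its $E_{1}\leftrightarrow F_{1}$ counterpart, produces the $\frac{1}{m}\{E_{1}(\omega)QF_{1} - F_{1}(\omega)QE_{1}\}$ term after noting $g(QE_{1},F_{1})D\omega$ versus $(F_{1}\omega)QE_{1}$ — care is needed here, and I would double-check this reshuffling since it is where sign or placement errors most easily creep in. The remaining $g(E_{1},D\omega)\nabla_{F_{1}}D\omega$ terms, upon substituting the operator form of $\nabla_{F_{1}}D\omega$, give $\frac{1}{m}(E_{1}\omega)[\beta F_{1} - QF_{1} + \frac{1}{m}(F_{1}\omega)D\omega]$; the $\frac{1}{m^2}(E_{1}\omega)(F_{1}\omega)D\omega$ part is symmetric and cancels, while $\frac{\beta}{m}\{F_{1}(\omega)E_{1} - E_{1}(\omega)F_{1}\}$ and the other half of the $\frac{1}{m}\{E_{1}(\omega)QF_{1} - \cdots\}$ term emerge.

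Collecting everything: the $\beta(\nabla_{F_{1}}E_{1} - \nabla_{E_{1}}F_{1}) - \beta[E_{1},F_{1}]$ combination vanishes, the $-(\nabla_{F_{1}}Q)E_{1} + (\nabla_{E_{1}}Q)F_{1}$ and $Q(\cdots)$ terms assemble into $(\nabla_{F_{1}}Q)E_{1} - (\nabla_{E_{1}}Q)F_{1}$ (with the sign dictated by how $K(E_{1},F_{1})D\omega$ sits on the left), and the surviving curvature, $\beta$-gradient, and Ricci-twisted gradient terms give precisely (\ref{k2}). I do not expect a conceptual obstacle; the main practical hazard is sign discipline in the $\frac{1}{m}$-terms and in correctly orienting the $(\nabla Q)$ terms against the convention (\ref{203}) for $K$, so I would track one representative term of each type carefully and let the symmetric pieces cancel by inspection rather than writing them out in full. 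Once (\ref{k2}) is established, it is the analogue of (\ref{307}) and (\ref{315}) and will serve as the starting point for the proof of Theorem \ref{thm4.1}, exactly as those identities did for Theorems \ref{001} and \ref{002}.
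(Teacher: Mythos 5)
Your proposal is correct and follows essentially the same route as the paper: differentiate the soliton equation $\nabla_{E_{1}}D\omega=\beta E_{1}-QE_{1}+\frac{1}{m}g(E_{1},D\omega)D\omega$ covariantly, antisymmetrize in $E_{1},F_{1}$, subtract the $\nabla_{[E_{1},F_{1}]}D\omega$ contribution, and read off (\ref{k2}); this is exactly the paper's computation (its equations (\ref{k3})--(\ref{k6})). One small bookkeeping correction at the spot you flagged: the piece $-\frac{1}{m}g(QE_{1},F_{1})D\omega$ does \emph{not} survive the antisymmetrization, since $g(QE_{1},F_{1})$ is symmetric in $E_{1},F_{1}$ ($Q$ is self-adjoint), so the entire Hessian-type term $\frac{1}{m}g(E_{1},\nabla_{F_{1}}D\omega)D\omega$ cancels against its swap; the full term $\frac{1}{m}\{E_{1}(\omega)QF_{1}-F_{1}(\omega)QE_{1}\}$, together with the $\frac{\beta}{m}$-term, comes entirely from the $\frac{1}{m}g(\cdot,D\omega)\nabla_{\cdot}D\omega$ pieces you also track, so the final identity is unaffected.
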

\begin{proof}
By hypothesis, the perfect fluid spacetime is endowed with $(m,\rho)$-quasi Einstein metric. Therefore, using (\ref{104}) we can write
\begin{equation}\label{k3}
\nabla_{E_{1}}D \omega+Q E_{1}=\frac{1}{m}g(E_{1},D \omega)D \omega+\beta E_{1}.
\end{equation}
Differentiating the equation (\ref{k3}) covariantly along $F_{1}$, we get
\begin{eqnarray}\label{k4}
\nabla_{F_{1}}\nabla_{E_{1}}D \omega &=& -\nabla_{F_{1}}Q E_{1}+ \frac{1}{m} \nabla_{F_{1}}g(E_{1},D \omega)D \omega\nonumber\\&& +\frac{1}{m} g(E_{1},D \omega)\nabla_{F_{1}}D \omega+ \beta \nabla_{F_{1}}E_{1}+(F_{1}\beta)E_{1}.
\end{eqnarray}
Exchanging $E_{1}$ and $F_{1}$ in (\ref{k4}), we lead
\begin{eqnarray}\label{k5}
\nabla_{E_{1}}\nabla_{F_{1}}D \omega &=& -\nabla_{E_{1}}Q  F_{1}+ \frac{1}{m}\nabla_{E_{1}}g(F_{1},D \omega)D \omega\nonumber\\&& +\frac{1}{m} g(F_{1},D \omega)\nabla_{E_{1}}D \omega +\beta \nabla_{E_{1}}F_{1}+(E_{1}\beta)F_{1}
\end{eqnarray}
and
\begin{equation} \label{k6}
\nabla_{[E_{1},F_{1}]}D \omega = -Q[E_{1},F_{1}]+ \frac{1}{m}g([E_{1},F_{1}],D \omega)D \omega+\beta [E_{1},F_{1}].
\end{equation}
Using (\ref{k3})-(\ref{k6}) and the relation $K(E_{1},F_{1})D \omega  =\nabla_{E_{1}}\nabla_{F_{1}}D \omega-\nabla_{F_{1}}\nabla_{E_{1}}D \omega-\nabla_{[E_{1},F_{1}]}D \omega$, we acquire
\begin{eqnarray}
K(E_{1},F_{1})D \omega  &=& (\nabla_{F_{1}}Q)E_{1}-(\nabla_{E_{1}}Q)F_{1}  +\frac{\beta}{m}\{ F_{1} (\omega)E_{1}-E_{1} (\omega)F_{1}\}\nonumber\\&&
+\frac{1}{m} \{ E_{1} (\omega)Q F_{1}-F_{1} (\omega)Q E_{1} \}+\{(E_{1}\beta)F_{1}-(F_{1}\beta)E_{1}\}.\nonumber
\end{eqnarray}
\end{proof}
Utilizing the previous Lemma and the equations (\ref{3}), (\ref{5b}), we infer
\begin{eqnarray}
\label{kk6}
&&K(E_{1}, F_{1})D\omega=E_{1} (\alpha_{1})F_{1}-F_{1} (\alpha_{1})E_{1}+\{E_{1} (\beta_{1})A(F_{1})-F_{1} (\beta_{1})A(E_{1})\}\rho\nonumber\\&&
+ \beta_{1} \{(\nabla_{E_{1}} A)(F_{1})\rho+ A(F_{1}) \nabla_{E_{1}} \rho-\nabla_{F_{1}} A)(E_{1})\rho- A(E_{1}) \nabla_{F_{1}} \rho \}\nonumber\\&&
+\frac{\beta}{m}\{F_{1} (\omega)E_{1}-E_{1} (\omega)F_{1}\}+\frac{1}{m}\{\alpha_{1} E_{1} (\omega)F_{1}
+\beta_{1} E_{1} (\omega)A(F_{1})\rho\nonumber\\&&
-\alpha_{1} F_{1} (\omega)E_{1}-\beta_{1} F_{1} (\omega)A(E_{1})\rho\}+\{(E_{1}\beta)F_{1}-(F_{1}\beta)E_{1}\}.
\end{eqnarray}
Taking a set of orthonormal frame field and executing contraction of the equation (\ref{kk6}), we get
\begin{eqnarray}
\label{k8}
Ric(E_{1}, D\omega)&=&(1-n)E_{1} (\alpha_{1})+E_{1} (\beta_{1})+\rho (\beta_{1})A(E_{1})\nonumber\\&&
+\beta_{1} \{(\nabla_{E_{1}} A)(F_{1})-\nabla_{E_{1}} A)(\rho)+ A(E_{1}) div \rho \}+\frac{\beta}{m}(n-1)(E_{1} \omega) \nonumber\\&&
+\frac{1}{m}\{\alpha_{1} E_{1} (\omega)+\beta_{1}\rho (\omega)A(E_{1})-n \alpha_{1} E_{1} (\omega)+\beta_{1} E_{1} (\omega)\}\nonumber\\&&
+(1-n)E_{1} (\beta).
\end{eqnarray}
Setting $E_{1}=\rho$ in the last equation and (\ref{1}), and then comparing both the equations, we find
\begin{equation}
\label{k9}
\{\frac{m}{1-n}(\alpha_{1}-\beta_{1})+\beta-\alpha_{1}\} \rho (\omega)=m \{(1-n)[\rho (\alpha_{1})+\rho(\beta)- \beta_{1} div \rho \}.
\end{equation}
Let $\alpha_{1}$, $\beta$ and $\omega$ are invariant under the velocity vector field $\rho$ which mean $\rho (\alpha_{1})=0$, $\rho (\beta)=0$ and $\rho (\omega)=0$. Hence, the foregoing equation yields either, $div \rho =0$, or $\beta_{1} =0$, since $m\neq 0$.\par
If $div \rho =0$, then the velocity vector field is conservative. Since a conservative vector field is always irrotational, we get the vorticity of the perfect fluid is zero.\par
If $\beta_{1} =0$, then we obtain that $\sigma +p=0$. This represents dark energy era.\par
This finishes the proof.

\section{{\bf Example}}
Let $\mathcal{M}^3=\{(x,y,z) \in \mathcal{R}^3 : z>0 \}$, where $(x,y,z)$ are the standard co-ordinate of $\mathcal{R}^3$. We choose the Riemannian metric $g$ on $\mathcal{M}^3$ described by
$$g=dx \otimes dx+dy \otimes dy+\frac{x^2+y^2+e^{-2z}}{4z}dz \otimes dz-\frac{y}{\sqrt{z}}dx \otimes dz-\frac{x}{\sqrt{z}}dy \otimes dz.$$
Let $$\delta_1=\frac{\partial}{\partial x},\quad \delta_2=\frac{\partial}{\partial y}, \quad \delta_3=ye^z\frac{\partial}{\partial x}+xe^z\frac{\partial}{\partial y}+2\sqrt{z}e^z\frac{\partial}{\partial z}.$$
Then, the vector fields $\delta_1,\delta_2,\delta_3$ are orthonormal.\par

We have
    $$[\delta_1,\delta_2]=0, \quad [\delta_1,\delta_3]=e^z \delta_2, \quad [\delta_2,\delta_3]=e^z \delta_1.$$
We define $\eta$, $\zeta$ and $\varphi$ by
    $$\eta=\frac{e^{-z}}{2\sqrt{z}}\,dz,\quad \zeta=\delta_3,\quad \varphi \delta_1=\delta_2,\quad \varphi \delta_2=-\delta_1,\quad \varphi \delta_3=0.$$

 We can easily verify that, $\eta$ and $\Phi$ are closed. Hence, $\mathcal{M}^3$ is an almost co-K\"ahler manifold.\par

Using Koszul's formula we get
$$\nabla_{\delta_1}\delta_1=0, \quad \nabla_{\delta_1}\delta_2=-e^z \delta_3, \quad \nabla_{\delta_1}\delta_3=e^z \delta_2,$$
$$\nabla_{\delta_2}\delta_1=-e^z\delta_3, \quad \nabla_{\delta_2}\delta_2=0, \quad \nabla_{\delta_2}\delta_3=e^z\delta_1,$$
    $$\nabla_{\delta_3}\delta_1=0, \quad \nabla_{\delta_3}\delta_2=0, \quad \nabla_{\delta_3}\delta_3=0.$$
    The components of the curvature tensor $K$ are given by
    $$K(\delta_1,\delta_2)\delta_1=-e^{2z}\delta_2, \quad K(\delta_1,\delta_2)\delta_2=e^{2z}\delta_1, \quad K(\delta_1,\delta_2)\delta_3=0,$$
    $$K(\delta_1,\delta_3)\delta_1=e^{2z}\delta_3, \quad K(\delta_1,\delta_3)\delta_2=2\sqrt{z}e^{2z}\delta_3,$$
     $$K(\delta_1,\delta_3)\delta_3=-e^{2z}\delta_1-2\sqrt{z}e^{2z}\delta_2, \quad K(\delta_2,\delta_3)\delta_1=2\sqrt{z}e^{2z}\delta_3,$$
    $$K(\delta_2,\delta_3)\delta_2=e^{2z}\delta_3, \quad K(\delta_2,\delta_3)\delta_3=-2\sqrt{z}e^{2z}\delta_1-e^{2z}\delta_2.$$

Utilizing the foregoing expressions of $K$, we compute the Ricci operator $Q$ by
    \be Q\delta_1=-2\sqrt{z}e^{2z}\delta_2, \quad Q\delta_2=-2\sqrt{z}e^{2z}\delta_1, \quad Q\delta_3=-2e^{2z}\delta_3. \label{501}\ee
    The tensor field $h$ is given by
    \be h\delta_1=-e^z \delta_1,\quad h\delta_2=e^z \delta_2, \quad h\delta_3=0. \ee
    Suppose that $\omega=z$. Then, $D\omega=2\sqrt{z}e^z \delta_3$. By directed computation, we have
    \be
    \begin{cases}
      \nabla_{\delta_1}D\omega=2\sqrt{z}e^{2z}\delta_2,\\
    	\nabla_{\delta_2}D\omega=2\sqrt{z}e^{2z}\delta_1,\\
    	\nabla_{\delta_3}D\omega=2e^{2z}(1+2z)\delta_3.
    	\end{cases}
    \label{502}
    	\ee
   From (\ref{501}) and (\ref{502}), we can verify that
   $$
   \begin{cases}
   Q\delta_1+\nabla_{\delta_1}D\omega-g(\delta_1,D\omega)D\omega=0,\\
   	Q\delta_2+\nabla_{\delta_2}D\omega-g(\delta_2,D\omega)D\omega=0,\\
   	Q\delta_3+\nabla_{\delta_3}D\omega-g(\delta_3,D\omega)D\omega=0.
   \end{cases}
   $$
   Thus for $m=1,\, \rho=\lambda_{1}=0$, equation (\ref{104}) is satisfied. Hence $M$ is an $(m,\rho)$-quasi-Einstein manifold.
\section{Conclusion}
Einstein solitons are essential in both mathematics and physics. It is intriguing to look into Einstein solitons and their generalizations in Riemannian and Semi-Riemannian geometry. Recent years have seen the development of many generalizations of Einstein solitons, including quasi-Einstein solitons, generalized quasi-Einstein solitons, $m$-quasi-Einstein solitons, $(m,\rho)$-quasi-Einstein solitons. In this study, we investigate the $(m,\rho)$-quasi-Einstein solitons in almost co-K\"ahler manifolds.\par

Here we show that an almost co-K\"ahler manifold admitting $(m,\rho)$-quasi-Einstein solitons is locally isomorphic to a solvable non-nilpotent Lie group. We also establish that there does not exist $(m,\rho)$-quasi-Einstein structure on a compact $(\kappa,\mu)$-almost co-K\"ahler manifold. Finally, we construct an example of an almost co-K\"ahler manifold with $(m,\rho)$-quasi-Einstein solitons.\par

Further study of $(m,\rho)$-quasi-Einstein solitons in the context of Hopf manifolds, as well as in the general theory of relativity and cosmology, is possible in the near future.

\section{Acknowledgement}
We would like to thank the Referees and the Editor for reviewing the paper carefully and their valuable comments to improve the quality of the paper.

\end{document}